\newcommand{\TheTitle}{Minimization based formulations of inverse problems and their regularization} 
\newcommand{\ShortTitle}{Minimization based inverse problems} 
\newcommand{\TheAuthors}{Barbara Kaltenbacher}
\headers{\ShortTitle}{\TheAuthors}
\title{{\TheTitle}\thanks{This work was supported by the Austrian Science Fund FWF under grants I2271 and P30054.}}
\author{Barbara Kaltenbacher
  \thanks{Alpen-Adria-Universit\"at Klagenfurt, Austria
    (\email{barbara.kaltenbacher@aau.at}, \url{wwwu.uni-klu.ac.at/bkaltenb/}).}
}
\theoremstyle{remark}
\newtheorem{example}{Example}
\newtheorem{remark}{Remark}
\newtheorem{assumption}{Assumption}
\newcommand{\R}{\mathbb{R}}
\newcommand{\N}{\mathbb{N}}
\renewcommand{\bar}{\overline}
\newcommand{\calI}{\mathcal{I}}
\newcommand{\calJ}{\mathcal{J}}
\newcommand{\calJKV}{\mathcal{J}^{\rm KV}}
\newcommand{\calD}{\mathcal{D}}
\newcommand{\calQ}{\mathcal{Q}}
\newcommand{\calR}{\mathcal{R}}
\newcommand{\calRt}{\widetilde{\calR}}
\newcommand{\calS}{\mathcal{S}}
\newcommand{\calT}{\mathcal{T}}
\newcommand{\bfF}{\mathbf{F}}
\newcommand{\bfx}{\mathbf{x}}
\newcommand{\bfy}{\mathbf{y}}
\newcommand{\xdag}{x^\dagger}
\newcommand{\udag}{u^\dagger}
\newcommand{\ydel}{y^\delta}
\newcommand{\xad}{x_\alpha^\delta}
\newcommand{\uad}{u_\alpha^\delta}
\newcommand{\ul}[1]{\underline{#1}}
\newcommand{\ol}[1]{\overline{#1}}
\newcommand{\setof}[2]{\left\{#1:#2\right\}}
\newcommand{\argmin}{\mbox{argmin}}
\newcommand{\Mad}{M_{ad}}
\newcommand{\Tikh}{T}
\newcommand{\topo}{\calT}
\newcommand{\trace}{\mathrm{tr}_{\partial\Omega}}
\newcommand{\volt}{\upsilon}
\newcommand{\Volt}{\Upsilon}
\newcommand{\curr}{\gamma}
\newcommand{\Curr}{\Gamma}
\def\revision#1{{#1}}
\begin{document}

\maketitle

% REQUIRED
\begin{abstract}
The conventional way of formulating inverse problems such as identification of a (possibly infinite dimensional) parameter, is via some forward operator, which is the concatenation of the observation operator with the parameter-to-state-map for the underlying model.
Recently, all-at-once formulations have been considered as an alternative to this reduced formulation, avoiding the use of a parameter-to-state map, which would sometimes lead to too restrictive conditions. Here the model and the observation are considered simultaneously as one large system with the state and the parameter as unknowns.
A still more general formulation of inverse problems, containing both the reduced and the all-at-once formulation, but also the well-known and highly versatile so-called variational approach (also called Kohn-Vogelius functional approach) as special cases, is to formulate the inverse problem as a minimization problem -- instead of an equation -- for the state and parameter. Regularization can be incorporated via imposing constraints and/or adding regularization terms to the objective.  
In this paper, after giving a motivation by formulating the electrical impedance tomography (EIT) problem by means of the classical Kohn-Vogelius functional, we will dwell on the regularization aspects  for such variational formulations in an abstract setting. Indeed, combination of regularization by constraints and by penalization leads to new methods that are applicable without solving forward problems. In particular, for the EIT problem we will consider a method employing box constraints in a very natural manner to incorporate the discrepancy principle for regularization parameter choice as well as a priori information on the searched for conductivity.
\end{abstract}

% REQUIRED
\begin{keywords}
inverse problems, regularization, minimization based formulations
\end{keywords}

% REQUIRED
\begin{AMS}
65J22, 65M32, 35R30
\end{AMS}

\section{Introduction}
The conventional way of formulating an inverse problem of recovering some quantity $x$ in a model 
\begin{equation}\label{Axu}
A(x,u)=0
\end{equation}
from (indirect) observations of the state $u$
\begin{equation}\label{Cuy}
C(u)=y
\end{equation}
is via an operator equation
\begin{equation}\label{Fxy}
F(x)=y\,,
\end{equation}
where $F=C\circ S$ is the concatenation of the observation operator $C$ with the parameter-to-state-map $S$ defined by 
\begin{equation}\label{AxSx0}
A(x,S(x))=0 \quad \forall x\in\calD\,,
\end{equation}
and thus allowing to eliminate the variable $u$ in \eqref{Axu}, \eqref{Cuy}.
The model operator $A:\widetilde{\calD}\times V\to W$, the observation operator $C:V\to Y$, the forward operator $F:\calD\to Y$ and the parameter-to-state operator $S:\calD\to V$ with $\calD\subseteq \widetilde{\calD}\subseteq X$ act between Hilbert or Banach spaces $V,W,X,Y$. 
Note that well-definedness of $S$ usually requires additional restrictions, so $\calD$ will indeed often be a proper subset of $\widetilde{\calD}$. In the following, we will set $\widetilde{\calD}=X$, as it is in fact often possible.

Recently, all-at-once formulations have been considered as an alternative to \eqref{Fxy} avoiding the use of a parameter-to-state map $S$, which would sometimes lead to too restrictive conditions, see, e.g., \cite{BurgerMuehlhuberIP,BurgerMuehlhuberSINUM,HaAs01,all-at-once,KKV14b,LeHe16}.
They are based on the original formulation as an all-at-once system of model and observation equation \eqref{Axu}, \eqref{Cuy}, 
\begin{equation}\label{Fxuy}
\bfF(\bfx)=\bfF(x,u)=\left(\begin{array}{c}A(x,u)\\C(u)\end{array}\right)
=\left(\begin{array}{c}0\\y\end{array}\right)=\bfy\,.
\end{equation}

A still more general formulation of inverse problems, containing both \eqref{Fxy}, \eqref{Fxuy} but also the well-known variational approach (often also called Kohn-Vogelius functional approach) see, e.g., \cite{Knowles1998,KohnVogelius87,KohnMcKenny90} as well as Example \ref{exKV} below, as special cases, is to formulate the inverse problem as 
\begin{equation}\label{minJ}
(x,u)\in\argmin\setof{\calJ(x,u;y)}{(x,u)\in \Mad(y)}
\end{equation}
for some cost functional $J$ and some admissible set $\Mad(y)$, both of them possibly depending on the data $y$.
Note that in general \eqref{minJ} like \eqref{Fxuy} avoids the existence and use of a parameter-to-state map $S$ \eqref{AxSx0}.
We call this a minimization based formulation (since the term ``variational'' is already occupied in closely related contexts).

Inverse problems in infinite dimensional spaces are often ill-posed and instead of $y$ usually only a noisy version $\ydel$ is available, thus regularization (see, e.g., \cite{BakKok04,EHNbuch96,KalNeuSch08,Kirs96,SGGHL08,SKHK12,Voge02} and the references therein) has to be employed.

Tikhonov regularization can be extended to the formulation \eqref{minJ} in a straightforward manner
\begin{equation}\label{minJR}
(\xad,\uad)\in\argmin\setof{\Tikh_\alpha(x,u;\ydel)=\calJ(x,u;\ydel)+\alpha\cdot\calR(x,u)}{(x,u)\in \Mad^\delta(y^\delta)}\,.
\end{equation}
\revision{
Here the dot in general denotes the Eucildean inner product, as the regularization parameter $\alpha\in\R_+^m$ and the mapping $\calR:X\times V\to\overline{\R}^m$  might have several components,} 
corresponding to several regularization terms or incorporationg different pieces of a priori information on $x$ and $u$. Also the constraints imposed via $\Mad^\delta(\ydel)$ may have a regularizing effect in the spirit of the method of quasi solutions , i.e., Ivanov regularization \cite{KK17, DombrovskajaIvanov65, Ivanov62, Ivanov63, IvanovVasinTanana02, KRR16, LorenzWorliczek13, NeubauerRamlau14, SeidmanVogel89}.
Note that regularization with respect to $u$ might seem unnecessary from the point of view of ill-posedness, which usually only affects $x$; still, certain bounds on the state variable might help to ensure well-posedness of the problem \eqref{minJR} i.e., existence of a minimizer, as will be demonstrated in 
\revision{
Section \ref{subsec:KV_rev} below for a variational formulation of the classical electrical impedance tomography (EIT) problem, cf. \cite{Knowles1998,KohnMcKenny90,KohnVogelius87}.
}

\medskip

Generalization of the convergence analysis to the the setting \eqref{minJ}, \eqref{minJR} will not only allow for an analysis in the sense of regularization theory for the variational approach from Example \ref{exKV} in Sections \ref{subsec:KV}, \ref{subsec:KV_rev} (which to the best of the authors' knowledge has not been carried out so far) but also for the design of new regularizing approaches, e.g, the Morozov type all-at-once version in Subsection \ref{subsec:aaoM}.
%Moreover, it sets the stage for the highly efficient solution of the regularized EIT problem in the variational formulation of Example \ref{exKV} via a sequence of box constrained quadratic programs, for which in \cite{HungerlaenderRendl15} a fast algorithm has been devised. 

\medskip

\revision{
Variational methods have been extensively developed and studied in the context of imaging applications, cf., e.g., \cite{SGGHL08} and the references therein, where usually ill-posedness plays a less pronounced role than here and the purpose of the term $\calR$ is rather to enhance certain image features; note that also $\mathcal{J}$ can be chosen to have several components in order to take into account different types of noise, cf., e.g., \cite{DeLosReyesSchoenlieb}.
We also wish to point to the recent paper \cite{Kindermann}, which relies on minimization based formulations of inverse problems as well; there, the main emphasis is put on the regularized  iterative solution of the resulting minimization problems by gradient type methods.
}

\medskip

The remainder of this paper is organized as follows.
Before proceeding to a convergence analysis for \eqref{minJR} in Section \ref{sec:convanal}, we will demonstrate its generality and applicability by means of 
\revision{
some special cases as well as the EIT problem in Section \ref{sec:cases},
} 
that will be revisited in the second part of Section \ref{sec:convanal}.
Section \ref{sec:convanal} also contains some considerations concerning convexity of the minimization problems \eqref{minJ}, \eqref{minJR}.

\medskip

\paragraph{Notation} By $(\xdag,\udag)$ we denote an exact solution of \eqref{minJ}, which we assume to exist. In the EIT example, this exact solution is denoted by $(\sigma^\dagger,\Phi^\dagger,\Psi^\dagger)$ and we indicate functions with several components such as $\Phi=(\phi_1,\ldots,\phi_I)$ by capital letters.
 
\section{Some special cases \revision{and some inverse problems examples}} \label{sec:cases}

\subsection{Reduced formulation}\label{subsec:red}

Using a discrepancy measure $\calS$ that quantifies the deterministic noise level according to
\begin{equation}\label{delta}
\calS(y,\ydel)\leq\delta\,,
\end{equation}
and the indicator function $\calI_M:W\to\ol{\R}$ defined by $\calI_M(w)=0$ if $w\in M$ and $+\infty$ else,
we can rephrase the conventional inverse problem formulation \eqref{Fxy} as a minimization based one \eqref{minJ}, namely
\begin{equation}\label{minJred}
\left\{\hspace*{-0.1cm}\begin{aligned}
\min_{(x,u)\in X\times V}& \calJ(x,u;y)=\calS(C(u),y)+\calI_{\{0\}}(A(x,u))\\
\mbox{ s.t. }&(x,u)\in\Mad(y)=\calD\times V\,,
\end{aligned}
\right.
\end{equation}
provided $\calS:Y\times Y\to\ol{\R}$ satisfies the definiteness condition
\begin{equation}\label{Sdefinite}
\forall y_1,y_2\in Y\, : \quad \calS(y_1,y_2)\geq0 \quad \mbox{ and }\quad \Bigl(y_1=y_2 \ \Leftrightarrow \ \calS(y_1,y_2)=0\Bigr)\,.
\end{equation}
The discrepancy measure $\calS$ will typically be defined by a Hilbert or Banach space (semi) norm, but can also be a general functional such as the Kullback-Leibler divergence, see, e.g., \cite{DissFlemming,HohageWerner13,DissPoeschl,SGGHL08,DissWerner,Werner15}. 

Alternative ways of rewriting the reduced form \eqref{Fxy} as \eqref{minJ} are, e.g., 
\begin{equation}\label{minJred0}
\left\{\hspace*{-0.1cm}\begin{aligned}
\min_{(x,u)\in X\times V}& \calJ(x,u;y)=\calS(F(x),y)\\
\mbox{ s.t. }&(x,u)\in\Mad(y)=\calD\times \{u_0\}\,,
\end{aligned}
\right.
\end{equation}
with some fixed dummy state $u_0$ or 
\begin{equation}\label{minJred1}
\left\{\hspace*{-0.1cm}\begin{aligned}
\min_{(x,u)\in X\times V}& \calJ(x,u;y)=\calS(C(u),y)\\
\mbox{ s.t. }&(x,u)\in\Mad(y)=\setof{(x,u)\in\calD\times V}{A(x,u)=0}\,.
\end{aligned}
\right.
\end{equation}

To rephrase classical reduced Tikhonov regularization 
\begin{equation}\label{Tikhred}
\min_{x\in\calD} \calS(F(x),\ydel)+\alpha\calR(x)
\end{equation}
as \eqref{minJR}, we return to \eqref{minJred} to immeditately see that \eqref{Tikhred} can be rewritten as \eqref{minJR} with $\calJ$ as in \eqref{minJred},
\[
\calR(x,u)=\calR(x)\,, \quad \Mad^\delta(\ydel)=\calD\times V
\]
using some regularization parameter $\alpha\in\R_+$ and some (usually proper and convex) regularization functional $\calR:\calD\to\ol{\R}$.
Likewise one can write Ivanov regularization \cite{KK17, DombrovskajaIvanov65, Ivanov62, Ivanov63, IvanovVasinTanana02, KRR16, LorenzWorliczek13, NeubauerRamlau14, SeidmanVogel89} as \eqref{minJR} with 
\[
\calR(x,u)=0\,, \quad \Mad^\delta(\ydel)=\setof{x\in\calD}{\calRt(x)\leq\rho}\times V\,,
\]
some functional $\calRt:X\to\ol{\R}$, and $\rho\geq\calRt(\xdag)$, where ideally $\rho=\calRt(\xdag)$, which allows to incorporate a priori known bounds on the exact solution. (For possible other choices of $\rho$ we refer to \cite{DombrovskajaIvanov65,KK17,KRR16}.)

Both approaches can be combined to 
\begin{equation}\label{minJRred}
\left\{\hspace*{-0.1cm}
\begin{aligned}
\min_{(x,u)\in X\times V}& \Tikh_\alpha(x,u;\ydel)=\calS(C(u),y)+\calI_{\{0\}}(A(x,u))+\alpha\cdot\calR(x,u)\\
\mbox{ s.t. }&(x,u)\in \Mad^\delta(\ydel)=\setof{x\in\calD}{\calRt(x)\leq\rho}\times V\,,
\end{aligned}
\right.
\end{equation}
which contains Tihonov and Ivanov regularization as special cases with $\calRt=0$, $\rho=0$, or $\calR=0$, respectively.
\subsection{All-at-once formulation}\label{subsec:aao}
Using a functional $\calQ_A:X\times V\to\ol{\R}$ for quantifying violation of the model, we can similarly rewrite \eqref{Fxuy} as \eqref{minJ} with
\begin{equation}\label{minJaao}
\left\{
\begin{aligned}
\min_{(x,u)\in X\times V}&\calJ(x,u;y)=\calS(C(u),y)+\calQ_A(x,u)\\
\mbox{ s.t. }&(x,u)\in\Mad(y)=X\times V\,,
\end{aligned}
\right.
\end{equation}
under the definiteness conditions \eqref{Sdefinite} and    
\begin{equation}\label{Qdefinite}
\forall x,u\in X\times V\, : \quad \calQ_A(x,u)\geq0 \quad\mbox{ and }\quad\Bigl(A(x,u)=0 \ \Leftrightarrow \ \calQ_A(x,u)=0\Bigr)\,.
\end{equation}
This contains the reduced formulation of the previous subsection as a special case with $\calQ_A(x,u)=\calI_{\{0\}}(A(x,u))$. A further natural instance is $\calQ_A(x,u)=\frac{1}{q}\|A(x,u)\|_W^q$ for some $q\in [1,\infty]$.

All-at-once Tikhonov regularization reads as \eqref{minJR} with 
\[
\Mad^\delta(\ydel)=\Mad(y)=X\times V\,,
\]
where $\alpha\in\R^m_+$ and $\calR: X\times V\to\ol{\R}^m$.

Also here, an Ivanov type version can be defined by setting 
\[
\calR(x,u)=0\,, \quad \Mad^\delta(\ydel)=\setof{(x,u)\in X\times V}{\calRt(x,u)\leq\rho}\,,
\]
with $\calRt:X\times V\to\ol{\R}$, and $\rho\geq\calRt(\xdag,\udag)$.

Again a combination of Tihkonov and Ivanov regularization is given by 
\begin{equation}\label{minJRaao}
\left\{\hspace*{-0.1cm}
\begin{aligned}
\min_{(x,u)\in X\times V}&\Tikh_\alpha(x,u;\ydel)=\calS(C(u),\ydel)+\calQ_A(x,u)+\alpha\cdot\calR(x,u)\\
\mbox{ s.t. }&(x,u)\in \Mad^\delta(\ydel)=\setof{(x,u)\in X\times V}{\calRt(x,u)\leq\rho}\,.
\end{aligned}
\right.
\end{equation}

\subsection{Morozov type all-at-once formulation}\label{subsec:aaoM}
Alternatively to the formulation in Subsection \ref{subsec:aao}, one might consider 
%\eqref{minJ} with
\begin{equation}\label{minJaaoM}
\left\{\hspace*{-0.1cm}
\begin{aligned}
\min_{(x,u)\in X\times V}&\calJ(x,u;y)=\calQ_A(x,u)\\
\mbox{ s.t. }&(x,u)\in\Mad(y)=\setof{(x,u)\in X\times V}{C(u)=y}\,.
\end{aligned}
\right.
\end{equation}
%which also comprises formulation \eqref{EITbox} of Example \ref{exKV} (cf. Remark \ref{rem:exKV-aaoM} below). 
%%%%and Example \ref{aprob} (with $\calQ_A(a,\Phi) =\tfrac12\sum_{i=1}^{I}\Bigl(\int_\Omega \Bigl(a|\nabla \phi_i|^2+ b_i (D_a^{-1}b_i-2\phi_i)\Bigr)\, dx$.)

A Tikhonov regularized version reads as \eqref{minJR} with
\[
\Mad^\delta(\ydel)=\setof{(x,u)\in X\times V}{\calS(C(u),\ydel)\le\tau\delta}
\]
for some $\tau\geq1$. We call this approach ``Morozov type'' since it imposes (a multiple of) the noise level as a bound on the discepancy, cf. \cite{Morozov}.
Note that when exchanging the roles of model and observation here
$\calJ(x,u;y)=\calS(C(u),y)$, 
$\Mad(y)=\setof{(x,u)\in X\times V}{A(x,u)=0}=\Mad^\delta(\ydel)$ 
(since the ``model noise level'' is considered as vanishing here)
we would just end up with the reduced formulation \eqref{minJred1} from Subsection \ref{subsec:red}.

An Ivanov-Morozov type all-at-once method results from setting
\[
\calR(x,u)=0\,,\quad \Mad^\delta(\ydel)=\setof{(x,u)\in X\times V}{\calS(C(u),\ydel)\le\tau\delta\mbox{ and }\calRt(x,u)\leq\rho}\,.
\]

Also here, we can combine Tikhonov and Ivanov regularization by considering
\begin{equation}\label{minJRaaoM}
%\hspace*{-0.1cm}
\left\{\hspace*{-0.1cm}
\begin{aligned}
&\min_{(x,u)\in X\times V}\Tikh_\alpha(x,u;\ydel)=\calQ_A(x,u)+\alpha\cdot\calR(x,u)
\\
&\mbox{s.t. }(x,u)\in \Mad^\delta(\ydel)=\setof{(x,u)\in X\times V}{\calS(C(u),\ydel)\le\tau\delta\mbox{ and }\calRt(x,u)\leq\rho}\,.
\end{aligned}
\right.
\end{equation}

\revision{
\subsection{Variational formulations of EIT and other applications}\label{subsec:KV}
The variational approach is highly versatile cf., e.g. \cite{AndrieuxBarangerBenAbda2006,BrownJaisKnowles2005,BrownJais2011,KnowlesWallace1996}, and can, beyond the original electrical impedance tomography EIT application, also be used in different physical contexts, e.g., for the identification of spatially varying Lam\'{e} parameters or for the reconstruction of cracks in linear elasticity from boundary measurements.
\\
We will here go into detail only about the EIT problem in example \ref{exKV} and then shortly sketch two further applications in examples \ref{exKV_mag}, \ref{exKV_crack}.
\begin{example}\label{exKV}
For simplicty of exposition we describe the 2-d case of EIT, following \cite{KohnMcKenny90}.
Here one seeks to identify the distributed conductivity $\sigma:\Omega\to\R$ in
\begin{equation}\label{EIT}
\nabla \cdot J_i= 0\,,\quad \nabla\times E_i=0\,,\quad J_i=\sigma E_i \quad\mbox{ in }\Omega\,,\quad i=1,\ldots, I,
\end{equation}
where $\Omega\subseteq\R^d$, $J_i:\Omega\to\R^d$, $E_i:\Omega\to\R^d$ are the current density and the electric field, respectively, corresponding to a boundary current $j_i=J_i\cdot\nu\vert_{\partial\Omega}$, and leading to a boundary voltage $\volt_i$. 
In 2-d this can be rewritten as 
\[
\nabla \cdot J_i= 0\,,\quad \nabla^\bot\cdot E_i=0\,,\quad J_i=\sigma E_i \quad\mbox{ in }\Omega\,,\quad i=1,\ldots, I,
\]
with the 2-d rotation operator
\[
\nabla^\bot = (-\frac{\partial}{\partial x_2}, \frac{\partial}{\partial x_1})^T\,.
\]
Using potentials $\phi_i$ and $\psi_i$ for $J_i$ and $E_i$   
\[
J_i=-\nabla^\bot \psi_i \,,\quad E_i=-\nabla\phi_i  \,,\quad i=1,\ldots, I\,,
\]
we can rewrite the problem as
\begin{equation}\label{EIT1}
\sqrt{\sigma} \nabla\phi_i=\frac{1}{\sqrt{\sigma}} \nabla^\bot \psi_i \quad \mbox{ in }\Omega\,,
\qquad\psi_i=\curr_i \,,\quad  \phi_i=\volt_i \quad \mbox{ on }\partial\Omega \,,\qquad i=1,\ldots, I\,,
\end{equation}
where for a parametrization $x$ of the boundary $\partial\Omega$ (normalized to $\|\dot{x}\|=1$)
\begin{equation}\label{intj}
\curr_i(x(s))=-\int_0^s j_i(x(r))\, d r\,,
\end{equation}
hence $J_i\cdot\nu=-\nabla^\bot\psi_i\cdot \nu=-\frac{d\curr_i}{ds}=j_i$.
With 
\[
x=\sigma, \quad u=(\Phi,\Psi)=(\phi_1,\ldots,\phi_I,\psi_1,\ldots,\psi_I), \quad y=(\Volt,\Curr)=(\volt_1,\ldots,\volt_I,\curr_1,\ldots,\curr_I),
\] 
this can be cast into the form \eqref{Fxuy} with 
\begin{equation}\label{AWCEIT}
\begin{aligned}
&A(\sigma,\Phi,\Psi)= \left(\sqrt{\sigma}\nabla\phi_1-\tfrac{1}{\sqrt{\sigma}}\nabla^\bot\psi_1,\ldots,\sqrt{\sigma}\nabla\phi_I-\tfrac{1}{\sqrt{\sigma}}\nabla^\bot\psi_I\right)\,,\\ 
& C(\Phi,\Psi)=\trace(\Phi,\Psi)\,,\\
& X\subseteq L^\infty(\Omega)\,, \quad V\subseteq H^1(\Omega)^{2I}\,, \quad W=L^2(\Omega)^{I}\,,  
\end{aligned}
\end{equation}
with the trace operator to be understood component wise.
\\
Using integration by parts and $\nabla\cdot(\nabla^\bot \psi_i)=0$ we have
\begin{equation}\label{intparts}
\int_{\partial\Omega}j_i\volt_i\, ds 
=-\int_{\partial\Omega}\phi_i\nabla^\bot\psi_i\cdot\nu\, ds
=-\int_\Omega\nabla\cdot(\phi_i\nabla^\bot\psi_i)\, dx
=-\int_\Omega\nabla\phi_i\cdot\nabla^\bot\psi_i\, dx\,,
\end{equation}
hence 
\begin{equation}\label{argminAJKV}
\begin{aligned}
&\argmin\setof{\tfrac12\|A(\sigma,\Phi,\Psi)\|_W^2}{C(\Phi,\Psi)=(\Volt,\Curr)}
\\
&=\argmin\setof{\tfrac12\|A(\sigma,\Phi,\Psi)\|_W^2-\int_{\partial\Omega}j_i\volt_i\, ds}{C(\Phi,\Psi)=(\Volt,\Curr)}\\
&=\argmin\setof{\calJKV(\sigma,\Phi,\Psi)}{\trace(\Phi,\Psi)=(\Volt,\Curr)}
\end{aligned}
\end{equation}
with the so-called Kohn-Vogelius functional 
\begin{equation}\label{JEIT}
\calJKV(\sigma,\Phi,\Psi)
=\tfrac12\sum_{i=1}^{I}\int_\Omega \Bigl(\sigma|\nabla \phi_i|^2+\frac{1}{\sigma}|\nabla^\bot \psi_i|^2\Bigr)\, dx
\,.
\end{equation}
With $\calJ=\calJKV$ and the admissible set 
\begin{equation}\label{MadEIT}
\begin{aligned}
\Mad(\Volt,\Curr)=
&\setof{\sigma\in L^\infty(\Omega)}{\sigma>0\mbox{ a.e. in }\Omega}\\
&\times \setof{(\Phi,\Psi)\in H^1(\Omega)^{2I}}{\trace(\Phi,\Psi)=(\Volt,\Curr)}
\end{aligned}
\end{equation}
we arrive at a minimization based formulation \eqref{minJ} of the EIT problem, that goes beyond 
the reduced and all-at-once formulations \eqref{Fxy}, \eqref{Fxuy}.
\\
The minimization of $\calJKV$ over $\Mad(\Volt,\Curr)$ formally leads to the first order optimality conditions
\[
\begin{aligned}
0=&\tfrac12\int_\Omega h\Bigl(\sum_{i=1}^I|\nabla\phi^\dagger_i|^2-\tfrac{1}{{\sigma^\dagger}^2}\sum_{i=1}^I|\nabla^\bot\psi^\dagger_i|^2\Bigr)\, dx \quad \forall h\in L^\infty(\Omega)\\
0=&\int_\Omega\sigma^\dagger \nabla\phi^\dagger_i\cdot\nabla v\,, dx\,, \qquad 0=\int_\Omega\tfrac{1}{\sigma^\dagger} \nabla^\bot\psi^\dagger_i\cdot\nabla^\bot w\,, dx\,, \\
&\forall v,w\in H_0^1(\Omega)\,, \ i\in\{1,\ldots,I\}\,,
\end{aligned}
\]
i.e., $\phi^\dagger_i$ and $\psi^\dagger_i$ solve the weak forms of the elliptic boundary value problems
\begin{equation}\label{PDEsEIT}
\begin{array}{rcl}
-\nabla\cdot(\sigma^\dagger\nabla\phi_i)&=0&\mbox{ in }\Omega\\
\phi_i&=\volt_i&\mbox{ on }\partial\Omega
\end{array} \quad
\begin{array}{rcl}
-\nabla^\bot\cdot(\tfrac{1}{\sigma^\dagger}\nabla^\bot\psi_i)&=0&\mbox{ in }\Omega\\
\psi_i&=\curr_i&\mbox{ on }\partial\Omega\,,
\end{array}
\end{equation}
where the left hand equation corresponds to the classical forward problem of EIT in the reduced formulation \eqref{Fxy} with 
\begin{equation}\label{FyEIT}
F(\sigma)=\trace\Phi\mbox{ where $\phi_i$ solves }\left\{\begin{array}{rcl}
-\nabla\cdot(\sigma^\dagger\nabla\phi_i)&=0&\mbox{ in }\Omega\\
\sigma\frac{\partial\phi_i}{\partial \nu}&=j_i&\mbox{ on }\partial\Omega
\end{array}\right.
\,, \quad y=\Volt\,.
\end{equation}
Note however, that in the all-at-once and minimization based formulations \eqref{Fxuy}, \eqref{minJ}, as opposed to methods based on \eqref{Fxy}, this forward problem will never be solved during minimization of the regularized cost functional and the state $(\Phi,\Psi)$ will reach a solution of this forward problems only in the limit of vanishing noise and regularization.
\\
Equivalence of the all-at-once formulation \eqref{Fxuy}, \eqref{AWCEIT} with the original inverse problem \eqref{EIT1} is obvious. To see equivalence also of the minimization based formulation \eqref{minJ}, \eqref{JEIT}, \eqref{MadEIT} with \eqref{EIT1}, consider the least squares version of \eqref{Fxuy}, \eqref{AWCEIT} (i.e., \eqref{minJ} with $\calJ=\frac12\|A\|_W^2$ and \eqref{MadEIT}) as well as its equivalence to \eqref{minJ}, \eqref{JEIT}, \eqref{MadEIT} via \eqref{argminAJKV}.
\\[2ex]
It is well-known that EIT is a severely ill-posed problem, thus it is necessary to apply regularization.
We first of all do so by imposing upper and lower bounds $\ol{\sigma}>\ul{\sigma}>0$ on the conductivity
\begin{equation}\label{boundssigma}
0\leq\ul{\sigma}\leq \sigma\leq\ol{\sigma}\mbox{ a.e. in }\Omega\,,
\end{equation}
as they are often a priori known in practice.
Aiming at the reconstruction of piecewise constant conductivities, one might use the total variation seminorm
$\calR(\sigma)=\|\sigma\|_{TV}$.
Alternatively, we will here make direct use of the regularizing effect of the pointwise bounds on $\sigma$ in \eqref{boundssigma}, i.e., use Ivanov type $L^\infty$ norm regularization. This
%, (like TV regularization), 
needs additional regularization of the state to guarantee existence of a minimizer, e.g., by using
\begin{equation}\label{REIT}
\calR(\sigma,\Phi,\Psi)=\tfrac12\|(\Phi,\Psi)\|_{{H^{3/2-\epsilon}(\Omega)}^{2I}}^2\,,
\end{equation}
which with $\epsilon\in(0,\frac12)$ admits jumping first derivatives of $\phi_i$, $\psi_i$, hence jumps of $\sigma$, while on the other hand still enabling boundedness (and compactness) of the  embedding $H^{3/2-\epsilon}(\Omega)\to C(\Omega)$.
A possible advantage of the $L^\infty$ instead of $TV$ setting is the easier discretization: The space of piecewise constant Ansatz functions is dense in $L^\infty(\Omega)$, a fact which does not hold in $BV(\Omega)$ cf. Example 4.1 in \cite{BartelsSINUM2012}.
Moreover, the $L^\infty$ constraint after discretization will naturally lead to a minimization with simple box constraints.
\\
To take into account noisy data $(\Volt^\delta,\Curr^\delta)$  with 
\begin{equation}\label{deltaEITb}
\|(\Volt^\delta,\Curr^\delta)-(\Volt,\Curr)\|_{L^\infty(\partial\Omega)^{2I}}\leq\delta
\end{equation}
and guarantee that the exact solution stays admissible for the regularized problem, we use 
\[
\begin{aligned}
\Mad^\delta((\Volt^\delta,\Curr^\delta))=
&\setof{\sigma\in L^\infty(\Omega)}{\ul{\sigma}\leq \sigma\leq\ol{\sigma}\mbox{ a.e. in }\Omega}\\
&\times \setof{(\Phi,\Psi)\in H^1(\Omega)^{2I}}{\|\trace(\Phi,\Psi)-(\Volt^\delta,\Curr^\delta)\|_{L^\infty(\partial\Omega)}\leq \tau\delta}\,,
\end{aligned}
\]
where $\tau>1$ is a fixed safety factor,
to end up  with a purely bound constrained minimization problem
\begin{equation}\label{EITbox}
\left\{\hspace*{-0.1cm}\begin{aligned}
&\min_{\sigma,\Phi,\Psi} 
\sum_{i=1}^{I}\left\{\tfrac12\int_\Omega 
|\sqrt{\sigma}\nabla \phi_i-\frac{1}{\sqrt{\sigma}}\nabla^\bot\psi_i|^2\, dx
+ \tfrac{\alpha}{2}(\|\phi_i\|_{H^{3/2-\epsilon}(\Omega)}^2+\|\psi_i\|_{H^{3/2-\epsilon}(\Omega)}^2)
\right\}\\
&\mbox{s.t.}\quad \ul{\sigma}\leq\sigma\leq\ol{\sigma}\mbox{ on }\Omega\,,\\
&\qquad 
\volt_i^\delta-\tau\delta\leq\phi_i\leq\volt_i^\delta+\tau\delta\,,\
\curr_i^\delta-\tau\delta\leq\psi_i\leq\curr_i^\delta+\tau\delta\,,\
\mbox{ on }\partial\Omega\,, \ i=1,\ldots,I\,,
\end{aligned}
\right.
\end{equation}
where discretization with piecewise constant and piecewise linear Ansatz functions for $\sigma$  and $\phi_i,\psi_i$, respectively, transfers these $L^\infty$ bounds to simple box constraints.
\\
A formulation with $\calJ=\calJKV$ 
\begin{equation}\label{EITJKV}
\left\{\hspace*{-0.1cm}\begin{aligned}
&\min_{\sigma,\Phi,\Psi} 
\sum_{i=1}^{I}\left\{\tfrac12\int_\Omega 
(\sigma|\nabla \phi_i|^2+\frac{1}{\sigma}|\nabla^\bot\psi_i|^2)\, dx
+ \tfrac{\alpha}{2}(\|\phi_i\|_{H^{3/2-\epsilon}(\Omega)}^2+\|\psi_i\|_{H^{3/2-\epsilon}(\Omega)}^2)
\right\}\\
&\mbox{s.t.}\quad \ul{\sigma}\leq\sigma\leq\ol{\sigma}\mbox{ on }\Omega\,,\\
&\qquad 
\volt_i^\delta-\tau\delta\leq\phi_i\leq\volt_i^\delta+\tau\delta\mbox{ on }\partial\Omega\,, \
\|\trace\psi_i-\curr_i^\delta\|_{W^{1,1}(\partial\Omega)}\leq\tau\delta\,,\
 i=1,\ldots,I\,,
\end{aligned}
\right.
\end{equation}
requires to impose the current data $j_i$ in an $L^1$ sense (note that $\curr_i^\delta$ is obtained from the actual measurements $j_i^\delta$ by integration along the boundary \eqref{intj} so that $\|\curr-\curr^\delta\|_{W^{1,1}(\partial\Omega)}\leq \|j-j^\delta\|_{L^1(\partial\Omega)}$)
as our convergence analysis will show,
so that the pure bound constraint structure unfortunatley gets lost.
\end{example}
\begin{remark}
We mention in passing that an alternative way of using the Kohn-Vogelius functional is to  minimize the functional
\[
\sum_{i=1}^{I}\tfrac12\int_\Omega |\sqrt{\sigma}(\nabla \phi_i-\nabla\tilde{\phi}_i)|^2\, dx\,.
\]
under the PDE constraints
\begin{equation}\label{PDEsEIT1}
\begin{array}{rll}
-\nabla\cdot(\sigma\nabla\phi_i)&=0&\mbox{ in }\Omega\\
\phi_i&=\volt_i&\mbox{ on }\partial\Omega
\end{array} \quad
\begin{array}{rcl}
-\nabla\cdot(\sigma\nabla\tilde{\phi}_i)&=0&\mbox{ in }\Omega\\
\sigma\frac{\partial\tilde{\phi}_i}{\partial \nu}&=j_i&\mbox{ on }\partial\Omega\,.
\end{array} \end{equation}
We are not going to consider this approach further here, since via the PDE constraints it involves the parameter-to-state map, which we wish to avoid.
\end{remark}
\begin{remark}
Alternatively, one might think of formulating the inverse problem of EIT as a minimization problem for the Dirichlet energies with appropriately chosen forcing terms defined by $f_i$ in 
\begin{equation}\label{EITDirichletenergy}
\left\{\hspace*{-0.1cm}
\begin{aligned}
&\min_{\sigma,\Phi} 
\tfrac12\sum_{i=1}^{I}\left\{\int_\Omega \sigma|\nabla \phi_i|^2\, dx
+\int_{\partial\Omega} f_i\phi_i\, ds\right\}
&\mbox{s.t. }\phi_i=\volt_i\mbox{ on }\partial\Omega\,, \ i=1,\ldots,I\,.
\end{aligned}
\right.
\end{equation}
This would  only require $I$ instead of $2I$ states and also make the minimization based approach easier extendable to other models based on energy minimization.
However, the optimality conditions for \eqref{EITDirichletenergy} lead to the state equations
\[
\begin{array}{rll}
-\nabla\cdot(\sigma^\dagger\nabla\phi_i)&=0&\mbox{ in }\Omega\\
\sigma\frac{\partial\phi_i}{\partial \nu}&=f_i+\mu_i&\mbox{ on }\partial\Omega\,,
\end{array} 
\]
where $\mu_i$ is the Lagrange multiplier corresponding to the constraint $\phi_i=\volt_i\mbox{ on }\partial\Omega$, and this Lagrange multiplier itself depends on $f_i$ via the remaining equations of the optimality system. Thus it seems involved (if possible at all) to choose $f_i$ such that the observed Neumann boundary conditions $f_i+\mu_i=j_i$ arise.
\end{remark}
}

\begin{remark}\label{rem:exKV-aaoM}
As a matter of fact, formulation \eqref{EITbox} of Example \ref{exKV} can be viewed as a special case of 
\eqref{minJRaaoM} with
\[
\begin{aligned}
&X=L^\infty(\Omega)\,, \quad
Y=L^\infty(\partial\Omega)^{2I}\,,\\  
&V=\setof{(\Phi,\Psi)\in H^1(\Omega)^{2I}}{\trace(\Phi,\Psi)\in Y}\,, \quad
C=\trace(\Phi,\Psi)\,,\\
&\calR(\sigma,\Phi,\Psi)=\tfrac12\|(\Phi,\Psi)\|_{H^{2-\varepsilon}(\Omega)}^2\,,\quad
\calRt(\sigma,\Phi,\Psi)=\|\sigma-\tfrac{\ol{\sigma}+\ul{\sigma}}{2}\|_{L^\infty(\Omega)}\,,\quad 
\rho=\tfrac{\ol{\sigma}-\ul{\sigma}}{2}\,, \\
&\calS((\Volt,\Curr),(\tilde{\Volt},\tilde{\Curr}))=
\|(\Volt,\Curr)-(\tilde{\Volt},\tilde{\Curr})\|_{L^\infty(\partial\Omega)^{2I}}
\end{aligned}
\]
\begin{equation}\label{QAEIT}
\begin{aligned} 
&A(\sigma,\Phi,\Psi)=\left(\sqrt{\sigma}\nabla\phi_1-\tfrac{1}{\sqrt{\sigma}}\nabla^\bot\psi_1,\ldots,\sqrt{\sigma}\nabla\phi_I-\tfrac{1}{\sqrt{\sigma}}\nabla^\bot\psi_I\right)\,, \\
&W=L^2(\Omega)^{I}\,, \quad
\calQ_A(\sigma,\Phi,\Psi)=\frac12\|A(\sigma,\Phi,\Psi)\|_{L^2(\Omega)^{I}}^2\,.
\end{aligned}
\end{equation}
Note that at a first glance this also holds for \eqref{EITJKV} with
\[
\begin{aligned} 
& A(\sigma,\Phi,\Psi)=\left(\sqrt{\sigma}\nabla\phi_1,\ldots,\sqrt{\sigma}\nabla\phi_I,
\tfrac{1}{\sqrt{\sigma}}\nabla^\bot\psi_1,\ldots,\tfrac{1}{\sqrt{\sigma}}\nabla^\bot\psi_I\right)\,, \\
&W=L^2(\Omega)^{2I}\,, \quad
\calQ_A(\sigma,\Phi,\Psi)=\frac12\|A(\sigma,\Phi,\Psi)\|_{L^2(\Omega)^{2I}}^2\,,
\end{aligned}
\]
however this formulation violates the condition $\calQ_A(\sigma^\dagger,\Phi^\dagger,\Psi^\dagger)=0$.

Also note that as opposed to the exact data case (cf. \eqref{argminAJKV}), in the noisy data case $\delta>0$, the two EIT formulations with cost functions $\frac12\|A\|_W$ as in \eqref{QAEIT} and $\calJKV$ as in \eqref{JEIT} are in general not equivalent.
\end{remark}

\revision{
\begin{example}\label{exKV_mag}
In magnetostatics, we have, in place of \eqref{EIT}, the following part of Maxwell's equations in $\R^3$ (note that the typical measurement setup using a so-called Epstein apparatus, i.e., an appropriate arrangement of coils, see, e.g., \cite{BHcurves}, cannot be modeled well in 2-d)
\[
\nabla \cdot B_i= 0\,,\quad \nabla\times H_i=J^{\text{imp}}_i\,,\quad B_i=\mu H_i \quad\mbox{ in }\Omega\,,\quad i=1,\ldots, I.
\]
Here $B_i$, $H_i$ are the magnetic flux density and field, respectively, corresponding to a current $J^{\text{imp}}_i$ impressed by an excitation coil and leading to a magnetic flux $\varPhi_{B,i}=\int_{\Gamma_c}n\cdot B\, d\Gamma$ through a measurement coil, while one typically imposes homogeneous boundary conditions on $B$ and $H$ or rather on their vector and scalar potentials $A_i:\Omega\to\R^3$, $\phi_i:\Omega\to\R$ in 
\[
B_i=\nabla\times A_i \,,\quad H_i=\nabla\phi_i+A_i^J \,,\quad i=1,\ldots, I\,,
\]
where $A^J_i$ is some given -- or at least computable -- vector potential of $J^{\text{imp}}_i=\nabla\times A^J_i$ (which for appropriate domain $\Omega$ exists due to 
$\nabla\cdot J^{\text{imp}}_i=\nabla\cdot(\nabla\times H_i)=0$).
Thus, the inverse problem of identifying a spatially varying permeability $\mu:\Omega\to\R_+$ from measurements of the magnetic fluxes $\varPhi_{B,i}$ for several excitations $J^{\text{imp}}_i$, $i=1,\ldots,I$, can be written as 
\[
\begin{aligned}
&(\mu,A_1,\ldots,A_I,\phi_1,\ldots\phi_I)\in\\
&\argmin\Bigl\{
\frac12\sum_{i=1}^{I}\int_\Omega \left|\sqrt{\mu}(\nabla \phi_i+A^J_i)-\frac{1}{\sqrt{\mu}}\nabla\times A_i\right|^2\, dx\, : \\
&\hspace*{2cm}\phi_i\vert_{\partial\Omega}=0\,, \ n\times A_i\vert_{\partial\Omega}=0\,, \
\oint_{\partial\Gamma_c} A\cdot ds=\varPhi_{B,i}\Bigr\}
\end{aligned}
\]
or, via integration by parts, leading to cancellation of $-\int_\Omega \nabla \phi_i\cdot(\nabla\times A_i)\, dx$, equivalently
\[
\begin{aligned}
&(\mu,A_1,\ldots,A_I,\phi_1,\ldots\phi_I)\in\\
&\argmin\Bigl\{
\sum_{i=1}^{I}\int_\Omega \Bigl(\frac{\mu}{2}|\nabla \phi_i+A^J_i|^2+\frac{1}{2\mu}
|\nabla\times A_i|^2-J^{\text{imp}}_i\cdot A_i\Bigr)\, dx\, :\\
&\hspace*{2cm}\phi_i\vert_{\partial\Omega}=0\,, \ n\times A_i\vert_{\partial\Omega}=0\,, \
\oint_{\partial\Omega_c} A\cdot ds=\varPhi_{B,i}\Bigr\}\,.
\end{aligned}
\]
As opposed to the electrostatic case Example \ref{exKV}, the data $J^{\text{imp}}_i$ here explicitely appears in the cost functional, a possibility that is taken into account in \eqref{minJ}.
%\\Obviously this approach also allows to study identification of nonlinear permeabilities $\mu=\mu(H)$, like, e.g., in \cite{BHcurves}.
\end{example}
\begin{example}\label{exKV_crack}
The detection of cracks from electrostatic measurements, see, e.g., \cite{AndrieuxBenAbda1996,FriedmanVogelius1989}, can be basically directly rephrased in the setting of Example \ref{exKV}, assuming now that the conductivity in the uncracked part of the domain $\Omega\setminus\Sigma$ is known and the searched for quantity is the crack $\Sigma$, which is a curve in the two dimensional domain $\Omega$ (and would be a surface in case $\Omega\subseteq\R^3$).
This can be formulated as  
\begin{equation}\label{minJa_crack}
\begin{aligned}
&(\Sigma,\Phi,\Psi)\in\\
&\argmin\{
\frac12\sum_{i=1}^{I}\int_{\Omega\setminus\Sigma} \left|\sqrt{\sigma}\nabla \phi_i-\frac{1}{\sqrt{\sigma}}\nabla^\bot \psi_i\right|^2\, dx\, : \\
&\hspace*{2cm}\phi_i\vert_{\partial\Omega}=\volt_i\,, \ \psi_i\vert_{\partial\Omega}=\curr_i\,, \ n\cdot\nabla^\bot \psi_i=0\mbox{ on }\Sigma
\}
\end{aligned}
\end{equation}
or equivalently as 
\begin{equation}\label{minJb_crack}
\begin{aligned}
&(\Sigma,\Phi,\Psi)\in\\
&\argmin\{
\sum_{i=1}^{I}\int_{\Omega\setminus\Sigma} \Bigl(\frac{\sigma}{2}|\nabla \phi_i|^2+\frac{1}{2\sigma}|\nabla^\bot \psi_i|^2\Bigr)\, dx\, :\\
&\hspace*{2cm}\phi_i\vert_{\partial\Omega}=\volt_i\,, \ \psi_i\vert_{\partial\Omega}=\curr_i\,.
\end{aligned}
\end{equation}
Via the last constraint in \eqref{minJa_crack} or via the first order optimality conditions in \eqref{minJb_crack}, this imposes homogeneous Neumann boundary conditions for the voltage on $\Sigma$, i.e., an isolating crack. 
%Here, other practically relevant transmission conditions on the crack may be considered.
\end{example}
}

\section{Convergence analysis} \label{sec:convanal}

In this section we will provide results on well-\-definedness, stability and convergence as the noisy data $\ydel$ tends to the exact one $y$ for the regularization
\begin{equation}\label{minJR1}
(\xad,\uad)\in\argmin\setof{\calJ(x,u;\ydel)+\alpha\cdot\calR(x,u)}{(x,u)\in \Mad^\delta(y^\delta)}\,,
\end{equation}
of the inverse problem
\begin{equation}\label{minJ1}
(x,u)\in\argmin\setof{\calJ(x,u;y)}{(x,u)\in \Mad(y)}
\end{equation}
with an appropriate choice of $\alpha$  and under appropriate conditions.
These are conditions on $\calJ$, $\calR$, the admissible sets $\Mad$, $\Mad^\delta$, and the regularization parameter choice $\alpha$  only.
The functionals $\calQ_A$, $\calS$, $\calRt$, and the operators $A,C,F$ will first of all not appear in this section, only later on, when we exemplarily check the abstract convergence conditions for the special cases from Section \ref{sec:cases}, see Subsection \ref{subsec:cases_rev} below.
Note that the analysis is known for the reduced version \eqref{minJRred} in the two cases $\calRt=0$, $\rho=0$ or $\calR=0$ and for the all-at once Tikhonov version \eqref{minJRaao} with $\calRt=0$, $\rho=0$, but new in all other cases. In particular, 
%via the observations from Remark \ref{rem:exKV-aaoM}, 
it also contains a convergence analysis for the variational apporaches from Example \ref{exKV}. %%%%%%and \ref{aprob}.
In this sense, Subsection \ref{subsec:cases_rev} will also serve as an illustration for the first of all quite abstract, seemingly proof-driven assumptions of Subsection \ref{subsec:convana}.

\subsection{Abstract convergence analysis}\label{subsec:convana}

The analysis carried out in this subsection follows the lines of well-known classical results on Tikhonov regularization, see, e.g., \cite{SeidmanVogel89,Tikhonov63} and recent extensions, see, e.g., \cite{DissPoeschl,DissFlemming,DissWerner}, in the sense that the key estimates result from minimality of the regularizer.

\medskip

Well-definedness of a minimizer of \eqref{minJR} for fixed $\alpha,\delta,\ydel$ can be easily established by the direct method of calculus of variations, provided the (generalized) Tikhonov functional
\[
\Tikh_\alpha(x,u;\ydel):=\calJ(x,u;\ydel)+\alpha\cdot\calR(x,u)
\]
and the admissible set $\Mad^\delta(\ydel)$ satisfy the following conditions.
\begin{assumption}\label{asswell}
Let a topology $\topo$ on $X\times V$ exist such that
\begin{enumerate}[label=(\alph*)]
\item\label{asswell:finite}
$\exists (x_0,u_0)\in \Mad^\delta(\ydel)\,:\ \Tikh_\alpha(x_0,u_0;\ydel)<\infty$
\item\label{asswell:compact}
$\forall c\in\R\,:\ M_c:=\setof{(x,u)\in\Mad^\delta(\ydel)}{\Tikh_{\alpha}(x,u;\ydel)\leq c}$ is $\topo$ relatively compact, i.e., every sequence in $M_c$ has a $\topo$ convergent subsequence.
\item\label{asswell:closed} 
$\Mad^\delta(\ydel)$ is $\topo$ closed.
\item\label{asswell:Ttaulsc}
$\Tikh_\alpha(x,u;\ydel)$ is $\topo$-lower semicontinuous, i.e., for any sequence $(x_n,u_n)$ converging with respect to $\topo$ we have \\
$\Tikh_\alpha(\lim^\topo_{n\to\infty}(x_n,u_n);\ydel)\leq \liminf_{n\to\infty}\Tikh_\alpha(x_n,u_n;\ydel)$
%$\calJ(\cdot,\cdot;\ydel)$ and $\calR$ are $\topo$-lower semicontinuous, i.e., for any sequence $(x_n,u_n)$ converging with respect to $\topo$ we have \\
%$\calJ(\lim^\topo_{n\to\infty}(x_n,u_n);\ydel)\leq \liminf_{n\to\infty}\calJ(x_n,u_n;\ydel)$\\
%(and likewise for all components $\calR_i$ of $\calR$) 
%\footnote{This implies that the linear combination $\Tikh_{\alpha}(\cdot,\cdot;\ydel)=\calJ(\cdot,\cdot;\ydel)+\alpha\cdot\calR$ is $\topo$-lower semicontinuous, due to the elementary inequality 
%$\liminf_{n\to\infty}a_n+\liminf_{n\to\infty}b_n\leq \liminf_{n\to\infty}(a_n+b_n)$.}
\end{enumerate}
\end{assumption}

\begin{proposition}
For each $\ydel\in Y$, $\alpha\in\R_+^m$, under Assumption \ref{asswell}, a minimizer of \eqref{minJR} exits.
\end{proposition}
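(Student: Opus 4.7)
The plan is to apply the direct method of the calculus of variations, with the four parts of Assumption \ref{asswell} each doing exactly one job: (a) to bound the infimum from above, (b) to extract a convergent subsequence from a minimizing sequence, (c) to keep the limit inside the admissible set, and (d) to transfer minimality to the limit via lower semicontinuity.

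Concretely, I would let $m:=\inf\setof{\Tikh_\alpha(x,u;\ydel)}{(x,u)\in\Mad^\delta(\ydel)}$. By \ref{asswell:finite} there exists $(x_0,u_0)\in\Mad^\delta(\ydel)$ with $\Tikh_\alpha(x_0,u_0;\ydel)<\infty$, so $m<\infty$. Choose a minimizing sequence $(x_n,u_n)\in\Mad^\delta(\ydel)$ with $\Tikh_\alpha(x_n,u_n;\ydel)\to m$. For all $n$ sufficiently large we have $\Tikh_\alpha(x_n,u_n;\ydel)\leq c:=\Tikh_\alpha(x_0,u_0;\ydel)+1$, hence the tail of the sequence lies in the level set $M_c$ from \ref{asswell:compact}. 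By relative $\topo$-compactness of $M_c$ we may extract a subsequence $(x_{n_k},u_{n_k})$ that $\topo$-converges to some limit $(x^*,u^*)\in X\times V$.

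The closedness assumption \ref{asswell:closed} yields $(x^*,u^*)\in\Mad^\delta(\ydel)$, so $(x^*,u^*)$ is admissible and therefore $\Tikh_\alpha(x^*,u^*;\ydel)\geq m$. On the other hand, lower semicontinuity \ref{asswell:Ttaulsc} along the converging subsequence gives
\[
\Tikh_\alpha(x^*,u^*;\ydel)\leq \liminf_{k\to\infty}\Tikh_\alpha(x_{n_k},u_{n_k};\ydel)=m\,,
\]
so $\Tikh_\alpha(x^*,u^*;\ydel)=m$ and $(x^*,u^*)$ is the sought minimizer of \eqref{minJR}.

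I do not expect any real obstacle here; the proof is the textbook direct-method argument, and Assumption \ref{asswell} has been tailored precisely so that each ingredient is available. The only subtlety worth a sentence in the write-up is the tacit convention that $\Tikh_\alpha$ does not take the value $-\infty$ (which is justified in all concrete instances of Section \ref{sec:cases}, since $\calJ$, $\calQ_A$, $\calS$, $\calI_{\{0\}}$ and the components of $\calR$ are nonnegative), so that $m\in[-\infty,\infty)$ is in fact finite and minimality $\Tikh_\alpha(x^*,u^*;\ydel)=m$ is meaningful.
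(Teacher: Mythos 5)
Your proof is correct and is exactly the argument the paper intends: the paper states the proposition without a written proof, merely noting that it follows "by the direct method of calculus of variations" with Assumption \ref{asswell} tailored so that each item plays the role you assign to it (finiteness of the infimum, compactness of sublevel sets, closedness of $\Mad^\delta(\ydel)$, and $\topo$-lower semicontinuity of $\Tikh_\alpha$). Your additional remark about excluding the value $-\infty$ is a sensible point of care that the paper leaves implicit.
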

\medskip

Concerning stability for fixed $\delta$, $\alpha$, a result like 
\[
\begin{aligned}
&y_n\to y^\delta\mbox{ in }Y\quad \Rightarrow\\
&\argmin\setof{\Tikh_\alpha(x,u;y_n)}{(x,u)\in\Mad^\delta(y_n)} \to \argmin\setof{\Tikh_\alpha(x,u;\ydel)}{(x,u)\in\Mad^\delta(\ydel)}
\end{aligned}
\] 
in some sense (cf., e.g., \cite[Theorem 2.1]{EKN89} for reduced Tikhonov regularization) cannot be expected to hold in general, due to the dependence of the admissible sets on $y_n$, which inhibits the use of a minimality argument in this general setting.
However, one can --- besides 
%the more important 
convergence as $\delta\to0$, see Theorem \ref{theo:conv} below --- still achieve uniform boundedness of the minimizers of \eqref{minJR} under the following conditions.
\begin{assumption}\label{assbounded} Let a norm $\|\cdot\|_B$ on $X\times V$ exist such that for all sequences $\forall(y_n)_{n\in\N}\subseteq Y\mbox{ with } y_n\to \ydel\mbox{ in }Y$
\begin{enumerate}[label=(\alph*)]
\item \label{assbounded:x0u0}
$\exists (x_0,u_0)\in X\times V\,\exists n_0\in\N\,\forall n\ge n_0\,: \ (x_0,u_0)\in\Mad^\delta(y_n)$
\item \label{assbounded:Jcont}
$\sup_{(x,u)\in \bigcup_{m\in\N}\Mad^\delta(y_m)}|\calJ(x,u;y_n)-\calJ(x,u,\ydel)|\to0$ as $n\to\infty$.
\item \label{assbounded:Tcoercive}
$\Tikh_\alpha(\cdot,\cdot,\ydel)$ is coercive on the admissible sets, i.e., $C\geq\Tikh_\alpha(x_0,u_0,\ydel)$, $\tilde{C}>0$ exist such that 
$\forall (x_n,u_n)_{n\in\N}\subseteq X\times V\,, \ (x_n,u_n)\in\Mad^\delta(y_n)\,:\ $\\ 
\hspace*{2cm}$\forall\, n\in\N\, : \ \Tikh_\alpha(x_n,u_n,\ydel)\leq C\ \Rightarrow
\ \forall\, n\in\N\, : \ \|(x_n,u_n)\|_B\leq\tilde{C}$
\end{enumerate}
\end{assumption}
Namely, Assumption \ref{assbounded}\ref{assbounded:x0u0},\ref{assbounded:Jcont} implies that for \\
$({\xad}_n,{\uad}_n)\in\argmin\setof{\Tikh_\alpha(x,u;y_n)}{(x,u)\in\Mad^\delta(y_n)}$ and all $n\geq n_0$, 
by minimality 
\[
\begin{aligned}
\limsup_{n\to\infty}\Tikh_\alpha({\xad}_n,{\uad}_n;\ydel)&\leq\limsup_{n\to\infty}\Tikh_\alpha({\xad}_n,{\uad}_n;y_n)
\leq \limsup_{n\to\infty}\Tikh_\alpha(x_0,u_0;y_n)\\
&\leq \Tikh_\alpha(x_0,u_0;\ydel)\leq C\,,
\end{aligned}
\]
hence boundedness of $(\|{\xad}_n,{\uad}_n\|_B)_{n\in\N}$ follows from the coercivity Assumption \ref{assbounded}\ref{assbounded:Tcoercive}.

Note that $\|\cdot\|_B$ might depend on the choice of $\calR$.
E.g., if $\calR$ is defined by some power of a norm, we can choose $\|\cdot\|_B$ as just this norm to satisfy Assumption \ref{assbounded}\ref{assbounded:Tcoercive} provided $\calJ\geq0$.
\begin{proposition} \label{prop:bounded}
For each $\ydel\in Y$, $\alpha\in\R_+^m$, under Assumption \ref{assbounded} the following stability estimate holds
\[
\sup_{n\in\N}\| \setof{\|({\xad}_n,{\uad}_n)\|_B}{({\xad}_n,{\uad}_n)\in\argmin\setof{\Tikh_\alpha(x,u;y_n)}{(x,u)\in\Mad^\delta(y_n)}}
\leq \tilde{C}\,.
\]
\end{proposition}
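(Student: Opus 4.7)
The plan is to formalize the limsup computation already sketched in the paragraph immediately above the proposition and to reduce the stated supremum bound to an eventual bound delivered by coercivity. The three key inputs are the minimality of $({\xad}_n,{\uad}_n)$ in $\Mad^\delta(y_n)$, the uniform data continuity of $\calJ$ from Assumption \ref{assbounded}\ref{assbounded:Jcont}, and the fact that $\calR$ is independent of $y$, so that only the $\calJ$-part of $\Tikh_\alpha$ is affected when we swap $y_n$ for $\ydel$.

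More precisely, I would fix $n_0$ and $(x_0,u_0)\in X\times V$ as in Assumption \ref{assbounded}\ref{assbounded:x0u0}, so that both $(x_0,u_0)$ and $({\xad}_n,{\uad}_n)$ lie in $\Mad^\delta(y_n)\subseteq\bigcup_{m\in\N}\Mad^\delta(y_m)$ for $n\geq n_0$, and define
$$\eta_n:=\sup\setof{|\calJ(x,u;y_n)-\calJ(x,u;\ydel)|}{(x,u)\in\textstyle\bigcup_{m\in\N}\Mad^\delta(y_m)},$$
which satisfies $\eta_n\to 0$. Chaining the definition of $\eta_n$, the minimality of $({\xad}_n,{\uad}_n)$ in $\Mad^\delta(y_n)$, the definition of $\eta_n$ again, and the standing choice $C\geq\Tikh_\alpha(x_0,u_0;\ydel)$ from Assumption \ref{assbounded}\ref{assbounded:Tcoercive} yields, for $n\geq n_0$,
$$\Tikh_\alpha({\xad}_n,{\uad}_n;\ydel)\leq \Tikh_\alpha({\xad}_n,{\uad}_n;y_n)+\eta_n\leq \Tikh_\alpha(x_0,u_0;y_n)+\eta_n\leq \Tikh_\alpha(x_0,u_0;\ydel)+2\eta_n\leq C+2\eta_n.$$

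I would then invoke the coercivity Assumption \ref{assbounded}\ref{assbounded:Tcoercive} on the eventually $\Tikh_\alpha$-bounded sequence $({\xad}_n,{\uad}_n)\in\Mad^\delta(y_n)$ to conclude $\|({\xad}_n,{\uad}_n)\|_B\leq \tilde{C}$ for all $n$ past a threshold. The finitely many remaining indices $n<n_0$ contribute only finitely many values $\|({\xad}_n,{\uad}_n)\|_B$, each finite by the existence proposition stated immediately before, so enlarging $\tilde{C}$ by their maximum gives the claimed uniform supremum bound.

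The one delicate point I foresee is the mismatch between the strict threshold $C$ appearing in Assumption \ref{assbounded}\ref{assbounded:Tcoercive} and the derived bound $C+2\eta_n$. The clean fix is to apply coercivity at the slightly enlarged level $C':=C+1\geq\Tikh_\alpha(x_0,u_0;\ydel)+1$, which is still a legitimate value in Assumption \ref{assbounded}\ref{assbounded:Tcoercive} and eventually dominates $C+2\eta_n$ once $\eta_n\leq 1/2$; the associated constant $\tilde{C}$ then controls the whole tail, and absorbing the finitely many early terms delivers the supremum estimate.
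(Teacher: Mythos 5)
Your proposal is correct and takes essentially the same route as the paper's own proof, which runs the identical chain --- minimality against $(x_0,u_0)$ for $n\ge n_0$ combined with Assumption \ref{assbounded}\ref{assbounded:Jcont} to get $\limsup_{n\to\infty}\Tikh_\alpha({\xad}_n,{\uad}_n;\ydel)\leq \Tikh_\alpha(x_0,u_0;\ydel)\leq C$, followed by the coercivity Assumption \ref{assbounded}\ref{assbounded:Tcoercive} --- just phrased with limsups instead of your explicit $\eta_n$. The two delicate points you single out (the mismatch between $C$ and $C+2\eta_n$, and the indices $n<n_0$) are passed over silently in the paper's one-paragraph argument, so your treatment is additional care rather than a deviation.
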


\medskip

We next show convergence of $(\tilde{x}^\delta,\tilde{u}^\delta):=(x_{\alpha(\delta,\ydel)}^\delta,u_{\alpha(\delta,\ydel)}^\delta)$ to $(\xdag,\udag)$ as $\delta\to0$, with an appropriate choce of the regularization parameter $\alpha=\alpha(\delta,\ydel)$, i.e., we prove the fact that 
\eqref{minJR1} defines a regularization method.

Note that in the context of such minimization based formulations, this regularization property could be abstractly shown by establishing Gamma convergence of the functional   
$\calJ(\cdot,\cdot;\ydel)+\alpha(\delta,\ydel)\cdot\calR(\cdot,\cdot)+\calI_{\Mad^\delta(y^\delta)}$ to $\calJ(\cdot,\cdot;y)+\calI_{\Mad(y)}$ as $\delta\to0$.

Instead, we here provide a direct proof based on the following conditions.
\begin{assumption}\label{assconv}
Let a topology $\topo$ on $ X\times V$ and $\bar{\delta}>0$, $\bar{\alpha}>0$ exist such that 
\begin{enumerate}[label=(\alph*)]
\item\label{assconv:admissible}
$\forall \delta\in(0,\bar{\delta}]\,:\ (\xdag,\udag)\in\Mad^\delta(\ydel)$
\item\label{assconv:finite}
$\forall j\in\{1,\ldots,m\}\,:\ \Bigl(\calR_j(\xdag,\udag)<\infty\mbox{ and }\exists\ul{r}\in\R\,:\ \calR_j\geq\ul{r}\Bigr)$
\item\label{assconv:compact}
$\forall c\in\R\,:\ M_c:=\setof{(x,u)\in\bigcup_{\delta\in(0,\bar{\delta}]}\Mad^\delta(\ydel)}{\Tikh_{\bar{\alpha}}(x,u;y)\leq c}$ is $\topo$ relatively compact.
\item\label{assconv:convM}
$\forall (\delta_n)_{n\in\N}\,, \ (z_n)_{n\in\N}\subseteq Y\,, \ (x_n,u_n)_{n\in\N}\subseteq X\times V \mbox{ with } (x_n,u_n)\in\Mad^{\delta_n}(z_n)\,:$\\$\delta_n\to0\,,\ z_n\to y\,, \ (x_n,u_n)\stackrel{\topo}{\to}(\hat{x},\hat{u})\ \Rightarrow \ (\hat{x},\hat{u})\in \Mad(y)$ 
\item\label{assconv:Jtaulsc}
$\calJ(\cdot,\cdot;y)$ is  $\topo$-lower semicontinuous.
\item\label{assconv:convJ}
$\limsup_{\delta\to0}\sup\setof{\calJ(x,u;y)-\calJ(x,u;\ydel)}{(x,u)\in \bigcup_{d\in(0,\bar{\delta}]}\Mad^d(y^d)}\leq0$\\
\item\label{assconv:convR}
$\forall j\in\{1,\ldots,m\}\,:\ \limsup_{\delta\to0}\frac{1}{\alpha_j(\delta,\ydel)} \Bigl(\calJ(\xdag,\udag;\ydel)-\calJ(\xdag,\udag;y)\Bigr)<\infty$\\
$\forall j\in\{1,\ldots,m\}\,:\ \limsup_{\delta\to0}\frac{1}{\alpha_j(\delta,\ydel)} \Bigl(\calJ(\xdag,\udag;y)-\calJ(\tilde{x}^\delta,\tilde{u}^\delta;\ydel)\Bigr)<\infty$\\
$\alpha(\delta,\ydel)\to0 \mbox{ as }\delta\to0$.
\end{enumerate}
\end{assumption}

\begin{theorem}\label{theo:conv}
Let the operators $\calJ$, $\calR$, the families of data $(\ydel)_{\delta>0}$ and of admissible sets $(\Mad^\delta(\ydel))_{\delta>0}$, and the regularization parameter choice $(\alpha(\delta,\ydel))_{\delta>0}$ satisfy Assumption \ref{assconv}. Then, as $\delta\to0$, $\ydel\to y$, the family $(\tilde{x}^\delta,\tilde{u}^\delta):=(x_{\alpha(\delta,\ydel)}^\delta,u_{\alpha(\delta,\ydel)}^\delta)$ has a $\topo$ convergent subsequence and the limit of every $\topo$ convergent subsequence solves \eqref{minJ1}. If the solution $(\xdag,\udag)$ to \eqref{minJ1} is unique then $(\tilde{x}^\delta,\tilde{u}^\delta)\stackrel{\topo}{\to} (\xdag,\udag)$.
\end{theorem}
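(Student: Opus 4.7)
The plan is to adapt the classical Tikhonov convergence template (cf., e.g., \cite{EHNbuch96,SeidmanVogel89}) to the present setting, in which the admissible set depends on the data and the regularization parameter is vector-valued. The starting point is the minimality estimate: since $(\xdag,\udag)\in\Mad^\delta(\ydel)$ for every $\delta\in(0,\bar{\delta}]$ by Assumption~\ref{assconv}\ref{assconv:admissible}, the fact that $(\tilde{x}^\delta,\tilde{u}^\delta)$ minimizes $\Tikh_{\alpha(\delta,\ydel)}(\cdot,\cdot;\ydel)$ over $\Mad^\delta(\ydel)$ yields
\[
\calJ(\tilde{x}^\delta,\tilde{u}^\delta;\ydel)+\alpha(\delta,\ydel)\cdot\calR(\tilde{x}^\delta,\tilde{u}^\delta)\leq\calJ(\xdag,\udag;\ydel)+\alpha(\delta,\ydel)\cdot\calR(\xdag,\udag).
\]
From this single inequality I intend to extract both the compactness needed to pick out a $\topo$-convergent subsequence and the bounds that identify its limit as a solution of~\eqref{minJ1}.

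The first step is to bound $\calR$ along the iterates. Rearranging the minimality estimate, exploiting the lower bound $\calR_k\geq\ul{r}$ from~\ref{assconv:finite} on the components $k\neq j$, and dividing by $\alpha_j(\delta,\ydel)$ leads to
\[
\calR_j(\tilde{x}^\delta,\tilde{u}^\delta)\leq\calR_j(\xdag,\udag)+\frac{\calJ(\xdag,\udag;\ydel)-\calJ(\tilde{x}^\delta,\tilde{u}^\delta;\ydel)}{\alpha_j(\delta,\ydel)}+\sum_{k\neq j}\frac{\alpha_k(\delta,\ydel)}{\alpha_j(\delta,\ydel)}\bigl(\calR_k(\xdag,\udag)-\ul{r}\bigr);
\]
the middle quotient is bounded by the sum of the two $\limsup$ conditions in~\ref{assconv:convR}, and together with the finiteness asserted in~\ref{assconv:finite} this yields $\limsup_{\delta\to0}\calR_j(\tilde{x}^\delta,\tilde{u}^\delta)<\infty$. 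The second step inserts~\ref{assconv:convJ} on both sides of the minimality estimate; together with $\alpha(\delta,\ydel)\to0$ and the $\calR$-bound just obtained one arrives at
\[
\limsup_{\delta\to0}\calJ(\tilde{x}^\delta,\tilde{u}^\delta;y)\leq\calJ(\xdag,\udag;y).
\]
These two bounds together place $(\tilde{x}^\delta,\tilde{u}^\delta)$ eventually in a sublevel set $M_c$ of Assumption~\ref{assconv}\ref{assconv:compact}, and relative compactness delivers a $\topo$-convergent subsequence with some limit $(\hat{x},\hat{u})$.

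It remains to identify this limit. Assumption~\ref{assconv}\ref{assconv:convM} applied along the subsequence with $z_n:=y^{\delta_n}\to y$ gives admissibility $(\hat{x},\hat{u})\in\Mad(y)$, while $\topo$-lower semicontinuity~\ref{assconv:Jtaulsc} combined with the $\limsup$ bound above produces
\[
\calJ(\hat{x},\hat{u};y)\leq\liminf_{n\to\infty}\calJ(\tilde{x}^{\delta_n},\tilde{u}^{\delta_n};y)\leq\calJ(\xdag,\udag;y),
\]
so $(\hat{x},\hat{u})$ attains the minimum over $\Mad(y)$ and hence solves~\eqref{minJ1}. When the minimizer is unique, the standard subsequence-of-subsequence argument upgrades subsequential to full $\topo$-convergence. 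The main technical obstacle is the bookkeeping in the $\calR$-bound for vector-valued $\alpha$: one has to use the lower bound in~\ref{assconv:finite} to eliminate the $k\neq j$ components and to rely on~\ref{assconv:convR} to supply exactly the quotient bound needed after dividing by $\alpha_j$, after which everything else is a direct transcription of the scalar-parameter Tikhonov argument.
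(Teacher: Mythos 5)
Your route is structurally identical to the paper's proof: the same minimality inequality, the same two consequences (a componentwise bound on $\calR_j(\tilde{x}^\delta,\tilde{u}^\delta)$ and the estimate $\limsup_{\delta\to0}\calJ(\tilde{x}^\delta,\tilde{u}^\delta;y)\leq\calJ(\xdag,\udag;y)$), compactness via Assumption \ref{assconv}\ref{assconv:compact}, identification of subsequential limits via \ref{assconv:convM} and \ref{assconv:Jtaulsc}, and a subsequence-of-subsequences argument under uniqueness; those last three stages of your write-up are fine. The gap is in your first step. Your displayed inequality for $\calR_j(\tilde{x}^\delta,\tilde{u}^\delta)$ is correct, but the conclusion $\limsup_{\delta\to0}\calR_j(\tilde{x}^\delta,\tilde{u}^\delta)<\infty$ does not follow from it: the cross term $\sum_{k\neq j}\frac{\alpha_k(\delta,\ydel)}{\alpha_j(\delta,\ydel)}\bigl(\calR_k(\xdag,\udag)-\ul{r}\bigr)$ is a finite constant multiplied by the ratios $\alpha_k/\alpha_j$, and nothing in Assumption \ref{assconv} bounds these ratios --- item \ref{assconv:convR} controls only the quotients of $\calJ$-differences by each $\alpha_j$, plus the decay $\alpha(\delta,\ydel)\to0$. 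A choice such as $m=2$, $\alpha_1=\delta$, $\alpha_2=\sqrt{\delta}$ is admissible and has $\alpha_2/\alpha_1\to\infty$, so your bound on $\calR_1$ diverges. The failure is not merely in your estimate: take $X=\R^2$ (trivial $u$), $\calJ(x;y)=|x_1-y|$, $\calR_1(x)=x_2^2$, $\calR_2(x)=e^{-x_2}$, $\Mad^\delta\equiv\Mad\equiv\R^2$, $\ydel=\delta$, $y=0$, $\xdag=(0,0)$, and $\alpha=(\delta,\sqrt{\delta})$; every item of Assumption \ref{assconv} holds (with $\topo$ the Euclidean topology), yet the unique regularized minimizer is $(\delta,t_\delta)$ with $t_\delta e^{t_\delta}=1/(2\sqrt{\delta})$, so $t_\delta\to\infty$ and no $\topo$-convergent subsequence exists. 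Hence your step cannot be repaired from the stated hypotheses; it is valid only for $m=1$ (where the cross term is empty) or under an additional hypothesis such as $\max_k\alpha_k\leq C\min_k\alpha_k$.

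For context: the paper's own proof stumbles at exactly this spot, and in a less visible way. There the cross terms are eliminated by estimating the sum of the nonnegative terms $\alpha_j(\calR_j(\tilde{x}^\delta,\tilde{u}^\delta)-\calR_j(\xdag,\udag))$ from above by the full weighted sum $\alpha\cdot(\calR(\tilde{x}^\delta,\tilde{u}^\delta)-\calR(\xdag,\udag))$ (first inequality of the display preceding \eqref{limsupR}); since discarding nonpositive summands can only increase a sum, that inequality points the wrong way, and the resulting claim \eqref{limsupR} is precisely what fails in the example above. So your bookkeeping is the honest one: it keeps the cross terms and thereby exposes that for $m>1$ the theorem needs a condition on the relative sizes of the components of $\alpha(\delta,\ydel)$, whereas for scalar $\alpha$ your argument and the paper's coincide and are complete. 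A small further remark: your second step does not actually require the $\calR$-bound --- $\limsup_{\delta\to0}\calJ(\tilde{x}^\delta,\tilde{u}^\delta;y)\leq\calJ(\xdag,\udag;y)$ follows from minimality using only $\calR_k\geq\ul{r}$, \ref{assconv:finite}, \ref{assconv:convJ}, \ref{assconv:convR} and $\alpha\to0$; it is the uniform bound on $\Tikh_{\bar{\alpha}}(\cdot,\cdot;y)$ needed to invoke \ref{assconv:compact} that genuinely consumes the componentwise $\calR$-bounds, and that is what breaks down in the counterexample.
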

\begin{proof}
From minimality of $(\tilde{x}^\delta,\tilde{u}^\delta)$ for \eqref{minJR} and admissibility of $(\xdag,\udag)$ for \eqref{minJR} (Assumption \ref{assconv}\ref{assconv:admissible}) we conclude
\begin{equation}\label{minimality}
\calJ(\tilde{x}^\delta,\tilde{u}^\delta,\ydel)+\alpha(\delta,\ydel)\cdot\calR(\tilde{x}^\delta,\tilde{u}^\delta)\leq
\calJ(\xdag,\udag,\ydel)+\alpha(\delta,\ydel)\cdot\calR(\xdag,\udag)
\end{equation}
i.e., after rearranging and using Assumption \ref{assconv}\ref{assconv:finite}, \ref{assconv:convJ}, \ref{assconv:convR}  
\begin{equation}\label{limsupJ}
\begin{aligned}
&\limsup_{\delta\to0} \calJ(\tilde{x}^\delta,\tilde{u}^\delta,y)-\calJ(\xdag,\udag,y) 
\\
&\leq \limsup_{\delta\to0} \Bigl(\calJ(\tilde{x}^\delta,\tilde{u}^\delta,y)-\calJ(\tilde{x}^\delta,\tilde{u}^\delta,\ydel)
+\calJ(\xdag,\udag,\ydel)-\calJ(\xdag,\udag,y)\\ 
&\qquad\qquad+\alpha(\delta,\ydel)\cdot(\calR(\xdag,\udag)-\calR(\tilde{x}^\delta,\tilde{u}^\delta))
\Bigr)\\ 
&\leq 0\,,
\end{aligned}
\end{equation}
and 
\[
\begin{aligned}
&\sum_{j:\calR_j(\tilde{x}^\delta,\tilde{u}^\delta)-\calR_j(\xdag,\udag)\geq0}\alpha_j(\delta,\ydel)(\calR_j(\tilde{x}^\delta,\tilde{u}^\delta)-\calR_j(\xdag,\udag))\\
&\leq
\alpha(\delta,\ydel)\cdot(\calR(\tilde{x}^\delta,\tilde{u}^\delta)-\calR(\xdag,\udag))
\leq
\calJ(\xdag,\udag,\ydel)-\calJ(\tilde{x}^\delta,\tilde{u}^\delta,\ydel)\,,
\end{aligned}
\]
hence, by Assumption \ref{assconv}\ref{assconv:convJ},\ref{assconv:convR}, we have for all $j\in\{1,\ldots,m\}$
\begin{equation}\label{limsupR}
\begin{aligned}
&\limsup_{\delta\to0} (\calR_j(\tilde{x}^\delta,\tilde{u}^\delta)-\calR_j(\xdag,\udag))
%\\&
\leq \max\Bigl\{0,\\
&\qquad\limsup_{\delta\to0} \frac{1}{\alpha_j(\delta,\ydel)}
\Bigl(\calJ(\xdag,\udag,\ydel)-\calJ(\xdag,\udag,y)
+\calJ(\xdag,\udag,y)-\calJ(\tilde{x}^\delta,\tilde{u}^\delta,\ydel)\Bigr)\Bigr\}\\
&<\infty\,.
\end{aligned}
\end{equation}
This together with \eqref{limsupJ} by Assumption \ref{assconv}\ref{assconv:finite} implies
\[
\sup_{\delta\in(0,\bar{\bar{\delta}}]} \Tikh_{\bar{\alpha}}(\tilde{x}^\delta,\tilde{u}^\delta,y)<\infty.
\]
for some $\bar{\bar{\delta}}\in(0,\bar{\delta}]$.
Thus by Assumption \ref{assconv}\ref{assconv:compact}, there exists a $\topo$ convergent subsequence $(\tilde{x}_n,\tilde{u}_n)_{n\in\N}\subseteq (\tilde{x}^\delta,\tilde{u}^\delta)_{\delta>0}$. 
The limit $(\hat{x},\hat{u})$ of any such $\topo$ convergent subsequence, by Assumption \ref{assconv}\ref{assconv:convM} lies in the exact admissible set $\Mad(y)$ and due to \eqref{limsupJ} and Assumption \ref{assconv}\ref{assconv:Jtaulsc} satisfies 
\[
\calJ(\hat{x},\hat{u},y)\leq\liminf_{n\to\infty}\calJ(\tilde{x}_n,\tilde{u}_n,y)
\leq \calJ(\xdag,\udag,y)=\min_{(x,u)\in \Mad(y)}\calJ(x,u,y)\,,
\]
thus $(\hat{x},\hat{u})$ solves \eqref{minJ1}.
In case of uniqueness of $(\xdag,\udag)$, a subsequence-subsequence argument yields $\topo$ convergence of the whole sequence $(\tilde{x}^\delta,\tilde{u}^\delta)$ as $\delta\to0$.
\end{proof}
Obviously, Assumption \ref{assconv}\ref{assconv:admissible},\ref{assconv:finite} and Assumption \ref{assbounded}\ref{assbounded:Jcont} (for all $\alpha>0$) imply Assumption \ref{asswell}\ref{asswell:finite}. 
Moreover Assumption \ref{assconv}\ref{assconv:compact},\ref{assconv:convJ} implies Assumption \ref{asswell}\ref{asswell:compact} for $\alpha\in(0,\bar{\alpha}]$.

Thus we can summarize sufficient conditions for well-definedness, stability and convergence as follows:
\begin{assumption}\label{asssum}
Let a topology $\topo$ and a norm $\|\cdot\|_B$ on $X\times V$ and $\bar{\delta}>0$ exist such that 
for the family of noisy data $(\ydel)_{\delta\in(0,\bar{\delta}]}$ satisfying $\ydel\to y$ in $Y$ as $\delta\to0$ and any sequence $(y_n)_{n\in\N}\subseteq Y$ with $y_n\to \ydel$ in $Y$ and for all \\
$(\tilde{x}^\delta,\tilde{u}^\delta)\in\argmin\setof{\calJ(x,u;\ydel)+\alpha(\delta,\ydel)\cdot\calR(x,u)}{(x,u)\in\Mad^\delta(\ydel)}$
\begin{enumerate}[label=(\alph*)]
\item\label{asssum:admissible}
$\forall \delta\in(0,\bar{\delta}]\,\exists n_0\in\N\,\forall n\ge n_0\,:\ (\xdag,\udag)\in\Mad^\delta(\ydel)\cap\Mad^\delta(y_n)$
\item\label{asssum:finite}
$\forall j\in\{1,\ldots,m\}\,:\ \Bigl(\calR_j(\xdag,\udag)<\infty\mbox{ and }\exists\ul{r}\in\R\,:\ \calR_j\geq\ul{r}\Bigr)$
\item\label{asssum:compact}
$\forall c\in\R\,\forall\alpha\in\R_+^m\,:\ M_c:=\setof{(x,u)\in\bigcup_{\delta\in(0,\bar{\delta}]}\Mad^\delta(\ydel)}{\Tikh_{\alpha}(x,u;y)\leq c}$ is $\topo$ relatively compact.
\item \label{asssum:Tcoercive}
$\forall\alpha\in\R_+^m\, \forall \delta\in(0,\bar{\delta}] \,
\exists C_\alpha\geq\Tikh_\alpha(\xdag,\udag,\ydel), \tilde{C}_\alpha>0\\ 
\forall (x_n,u_n)_{n\in\N}\subseteq X\times V\,, \ (x_n,u_n)\in\Mad^\delta(y_n)\,:$\\
\hspace*{2cm}$\forall\, n\in\N\, : \ \Tikh_\alpha(x_n,u_n,\ydel)\leq C_\alpha\ \Rightarrow
\ \forall\, n\in\N\, : \ \|(x_n,u_n)\|_B\leq\tilde{C}_\alpha$
\item\label{asssum:closed} 
$\forall \delta\in(0,\bar{\delta}]\,:\Mad^\delta(\ydel)$ is $\topo$ closed.
\item\label{asssum:convM}
$\forall (\delta_n)_{n\in\N}\,, \ (z_n)_{n\in\N}\subseteq Y\,, \ (x_n,u_n)_{n\in\N}\subseteq X\times V \mbox{ with } (x_n,u_n)\in\Mad^{\delta_n}(z_n)\,:$\\$\delta_n\to0\,,\ z_n\to y\,, \ (x_n,u_n)\stackrel{\topo}{\to}(\hat{x},\hat{u})\ \Rightarrow \ (\hat{x},\hat{u})\in \Mad(y)$ 
\item\label{asssum:Ttaulsc}
$\forall j\in\{1,\ldots,m\}\,:\ \calR_j$ is  $\topo$-lower semicontinuous
\item\label{asssum:Jtaulsc}
$\forall \delta\in(0,\bar{\delta}]$, $\calJ(\cdot,\cdot;\ydel)$ and  $\calJ(\cdot,\cdot;y)$ are  $\topo$-lower semicontinuous.
\item \label{asssum:Jcont}
$\forall \delta\in(0,\bar{\delta}]\,:\ \sup_{(x,u)\in \bigcup_{m\in\N}\Mad^\delta(y_m)}|\calJ(x,u;y_n)-\calJ(x,u,\ydel)|\to0$ as $n\to\infty$
\item\label{asssum:convJ}
$\limsup_{\delta\to0}\sup\setof{\calJ(x,u;y)-\calJ(x,u;\ydel)}{(x,u)\in \bigcup_{d\in(0,\bar{\delta}]}\Mad^d(y^d)}\leq0$\\
\item\label{asssum:convR}
$\limsup_{\delta\to0}\frac{1}{\alpha_j(\delta,\ydel)} \Bigl(\calJ(\xdag,\udag;\ydel)-\calJ(\xdag,\udag;y)\Bigr)<\infty$\\
$\limsup_{\delta\to0}\frac{1}{\alpha_j(\delta,\ydel)} \Bigl(\calJ(\xdag,\udag;y)-\calJ(\tilde{x}^\delta,\tilde{u}^\delta;\ydel)<\infty$\\
$\alpha(\delta,\ydel)\to0 \mbox{ as }\delta\to0$.
\end{enumerate}
\end{assumption}

\subsection{Some special cases \revision{ and EIT }revisited} \label{subsec:cases_rev}

\subsubsection{Reduced, all-at-once and Morozov type all-at-once formulation}\label{subsec:redaaoMaao}

We first of all draw the consequences from the convergence analysis made so far for the reduced \eqref{minJred}, \eqref{minJRred}, the all-at-once \eqref{minJaao}, \eqref{minJRaao}, and the Morozov type all-at-once \eqref{minJaaoM}, \eqref{minJRaaoM} formulation. For this purpose, we make certain assumptions that are  actually quite usual and familiar in the reduced case (see, e.g., \cite{DissFlemming}, \cite{DissPoeschl}, \cite{DissWerner}).

\begin{assumption}\label{ass:aao}
Let a topology $\topo$ and a norm $\|\cdot\|_B$ on $X\times V$ exist such that
\begin{enumerate}[label=(\roman*)]
\item \label{ass:aao:rho}
$\rho\geq \calRt(\xdag,\udag)$
\item \label{ass:aao:finite}
$\forall j\in\{1,\ldots,m\}\,:\ \Bigl(\calR_j(\xdag,\udag)<\infty\mbox{ and }\exists\ul{r}\in\R\,:\ \calR_j\geq\ul{r}\Bigr)$
\item \label{ass:aao:compact} 
for all $z\in Y$, $c>0$, the mapping 
\[
f:X\times V\to\R^4\,, \quad
(x,u)\mapsto (\calQ_A(x,u),\calR(x,u),\calRt(x,u),\calS(C(u),z))
\] 
has $\topo$ compact and $\|\cdot\|_B$ bounded sublevel set 
\[
L_c(f)=\setof{(x,u)\in X\times V}{\max_{i\in\{1,\ldots,4\}} f_i \leq c}
\]
\item \label{ass:aao:lsc} 
for all $z\in Y$, the mapping $f$ defined in the previous item is component wise $\topo$ lower semicontinuous
\item 
\label{ass:aao:Scont}
for all $z\in Y$, the family of mappings $(\calS(z,\cdot):Y\to\R)_{z\in Z}$ is uniformly continuous on $Z=\setof{C(u)}{\exists x\in X\,: \ \calRt(x,u)\leq\rho}$ at $y$, i.e.\\
$\lim_{\hat{y}\to y} \sup_{z\in Z}|\calS(z,\hat{y})-\calS(z,y)|=0$. 
\end{enumerate}
\end{assumption}

\begin{corollary}\label{cor:convredaaoMaao}
Let Assumption \ref{ass:aao} be satisfied and let \eqref{delta}, \eqref{Sdefinite}, \eqref{Qdefinite} hold. (For the reduced formulation we set $\calQ_A(x,u)=\calI(A(x,u))$ and additionally assume $\topo$-closedness of $\calD$.) 

Then for each $\ydel\in Y$, $\alpha\in\R_+^m$, the minimizers of the reduced \eqref{minJRred}, the all-at-once \eqref{minJRaao}, and the Morozov type all-at-once \eqref{minJRaaoM} regularized formulation exist, 
and for any sequence $(y_n)_{n\in\N}\subseteq Y\mbox{ with } y_n\to \ydel\mbox{ in }Y$ as $n\to\infty$ the sequence of corresponding regularized minimizers is $\|\cdot\|_B$ bounded according to Proposition \ref{prop:bounded}.

Assume additionally that a regularization parameter choice satisfying 
\begin{equation}\label{alphared}
\alpha(\delta,\ydel)\to 0\mbox{ and }\frac{\delta}{\revision{\min_{j\in\{1,\ldots,m\}}\alpha_j}(\delta,\ydel)}\leq c\mbox{ as }\delta\to0
\end{equation}
for some $c>0$ independent of $\delta$, is employed, where the second condition in \eqref{alphared} can be skipped for the Morozov type all-at-once formulation.

Then, as $\delta\to0$, $\ydel\to y$, the family $(\tilde{x}^\delta,\tilde{u}^\delta):=(x_{\alpha(\delta,\ydel)}^\delta,u_{\alpha(\delta,\ydel)}^\delta)$ defined by 
the reduced \eqref{minJRred}, the all-at-once \eqref{minJRaao}, and the Morozov type all-at-once \eqref{minJRaaoM} regularized formulation, 
has a $\topo$ convergent subsequence and the limit of every $\topo$ convergent subsequence solves the inverse problem with exact data \eqref{Axu}, \eqref{Cuy}. If the solution $(\xdag,\udag)$ to \eqref{Axu}, \eqref{Cuy} is unique, then $(\tilde{x}^\delta,\tilde{u}^\delta)\stackrel{\topo}{\to} (\xdag,\udag)$.
\end{corollary}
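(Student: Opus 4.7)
The plan is to derive the corollary from Theorem \ref{theo:conv}, Proposition \ref{prop:bounded}, and the existence proposition by specializing Assumption \ref{asssum} (which combines all of \ref{asswell}, \ref{assbounded}, \ref{assconv}) to each of the three formulations. In all three cases the cost $\calJ$ is built from at most two of the ingredients $\calS(C(u),\ydel)$, $\calQ_A(x,u)$, plus the regularization term $\alpha\cdot\calR(x,u)$, and $\Mad^\delta(\ydel)$ is cut out by the constraints $\calRt(x,u)\leq\rho$ and (for \eqref{minJRaaoM}) $\calS(C(u),\ydel)\leq\tau\delta$. So all conditions in Assumption \ref{asssum} reduce to a finite list of properties of the basic ingredients $\calJ_A$, $\calR$, $\calRt$, $C$, $\calS$, which are exactly collected in Assumption \ref{ass:aao}.

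First I would verify the structural items \ref{asssum:finite}, \ref{asssum:compact}, \ref{asssum:Tcoercive}, \ref{asssum:closed}, \ref{asssum:Ttaulsc}, \ref{asssum:Jtaulsc} as direct consequences of Assumption \ref{ass:aao}\ref{ass:aao:finite}-\ref{ass:aao:lsc}: the $\topo$-closedness and $\|\cdot\|_B$-coercivity of the relevant sublevel sets follow from the joint compactness/lower semicontinuity of the vector-valued mapping $f=(\calQ_A,\calR,\calRt,\calS(C(\cdot),z))$, combined with $\calI_{\{0\}}(A(x,u))\geq\calQ_A(x,u)$ or $\calI_{\{0\}}\circ A$ replacing $\calQ_A$ in the reduced case (and the extra $\topo$-closedness of $\calD$ assumed there). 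For the admissibility item \ref{asssum:admissible} I would combine \ref{ass:aao:rho} (which gives $\calRt(\xdag,\udag)\leq\rho$) with the noise bound \eqref{delta} and the uniform continuity \ref{ass:aao:Scont} of $\calS$: in the Morozov case this yields $\calS(C(\udag),\ydel)\leq\delta<\tau\delta$ since $C(\udag)=y$ by \eqref{Sdefinite} and $\calQ_A(\xdag,\udag)=0$ by \eqref{Qdefinite}, and the strict inequality together with \ref{ass:aao:Scont} gives $(\xdag,\udag)\in\Mad^\delta(y_n)$ for $n$ large. The invariance $(\hat x,\hat u)\in\Mad(y)$ in \ref{asssum:convM} follows from $\topo$-lower semicontinuity of $\calRt$ and $\calS(C(\cdot),y)$ (note $\calS(C(u_n),z_n)\to\calS(C(\hat u),y)$ by \ref{ass:aao:Scont} together with lower semicontinuity) and the definiteness \eqref{Sdefinite}, \eqref{Qdefinite}.

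Items \ref{asssum:Jcont} and \ref{asssum:convJ} are essentially the uniform continuity \ref{ass:aao:Scont}, applied to the set $Z$ of admissible observations: all admissible $(x,u)$ satisfy $\calRt(x,u)\leq\rho$, so $C(u)\in Z$, and the difference $\calJ(x,u;y_n)-\calJ(x,u;\ydel)$ reduces to $\calS(C(u),y_n)-\calS(C(u),\ydel)$, which tends to $0$ uniformly by \ref{ass:aao:Scont}; for the Morozov formulation $\calJ=\calQ_A$ is independent of the data, making these items trivial.

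The heart of the argument is condition \ref{asssum:convR}, which is where the parameter choice \eqref{alphared} enters. For the reduced and all-at-once formulations, $\calJ(\xdag,\udag;\ydel)-\calJ(\xdag,\udag;y)=\calS(C(\udag),\ydel)-\calS(C(\udag),y)=\calS(y,\ydel)\leq\delta$ (using $C(\udag)=y$ and \eqref{Sdefinite}), so the required boundedness of $\delta/\alpha_j(\delta,\ydel)$ is exactly \eqref{alphared}; the second inequality in \ref{asssum:convR} holds trivially since $\calJ(\xdag,\udag;y)=0$ and $\calJ\geq0$. For the Morozov formulation $\calJ=\calQ_A$ is data-independent, so the rate condition on $\alpha$ is not needed, matching the statement of the corollary. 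Combining these verifications, Proposition \ref{prop:bounded} yields the stability statement and Theorem \ref{theo:conv} yields the subsequential convergence and uniqueness-promoted full convergence. The main technical nuisance, rather than a real obstacle, is the bookkeeping needed in the Morozov case: one has to check that the moving admissible set $\Mad^\delta(\ydel)$ (whose $y$-dependence is not present in the other two formulations) remains compatible with \ref{asssum:convM}, which requires both the strict inequality $\delta<\tau\delta$ (hence $\tau>1$ is essential) and lower semicontinuity of $\calS(C(\cdot),y)$ when passing to the limit along $z_n\to y$.
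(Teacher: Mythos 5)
Your proposal is correct and follows essentially the same route as the paper's own proof: an item-by-item verification of Assumption \ref{asssum} (treating the all-at-once/reduced and Morozov cases separately) using Assumption \ref{ass:aao}, the definiteness conditions \eqref{Sdefinite}, \eqref{Qdefinite}, the noise bound \eqref{delta}, the safety factor $\tau>1$, and the parameter choice \eqref{alphared}, followed by an appeal to Proposition \ref{prop:bounded} and Theorem \ref{theo:conv}, with the same key observations that the Morozov cost $\calJ=\calQ_A$ is data-independent (so \ref{asssum:Jcont}, \ref{asssum:convJ} are trivial and the second part of \eqref{alphared} can be dropped) and that \ref{asssum:convR} reduces to $\calS(y,\ydel)\le\delta$ together with nonnegativity of $\calJ$. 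The only cosmetic difference is in \ref{asssum:convM} for the Morozov case, where the paper derives $\calS(C(\hat u),y)=0$ via a liminf estimate combining lower semicontinuity in the first argument with the uniform continuity \ref{ass:aao:Scont} in the second (rather than asserting, as you do parenthetically, full convergence of $\calS(C(u_n),z_n)$, which lower semicontinuity alone does not give), but your ingredients are identical and yield the same conclusion.
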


\begin{proof}
We step by step check all items of Assumption \ref{asssum} for the all-at-once formulation (aao), (which contains the reduced one as a special case), and for the Morozov type all-at-once formulation (Maao):
\begin{itemize}
\item[\ref{asssum:admissible}]
\begin{itemize}
\item in case of (aao)
follows from Assumption \ref{ass:aao}\ref{ass:aao:rho}
\item in case of (Maao) follows from \eqref{delta}, $\tau\geq1$ and Assumption \ref{ass:aao}\ref{ass:aao:rho}. With $\tau>1$ and Assumption \ref{ass:aao}\ref{ass:aao:Scont} we also get $(\xdag,\udag)\in \Mad^\delta(y_n)$ for $n$ sufficiently large.
\end{itemize}
\item[\ref{asssum:finite}]
is explicitely imposed in Assumption \ref{ass:aao}\ref{ass:aao:finite}.
\item[\ref{asssum:compact},\ref{asssum:Tcoercive}]
follow from Assumption \ref{ass:aao}\ref{ass:aao:compact}.
\item[\ref{asssum:closed}]
\begin{itemize}
\item in case of (aao)
follows from $\topo$ lower semi continuity of $\calRt$ in Assumption \ref{ass:aao}\ref{ass:aao:lsc}
\item in case of (Maao)
follows from $\topo$ lower semicontinuity of $\calRt$ and of $(x,u)\mapsto\calS(C(u),\ydel)$ according to Assumption \ref{ass:aao}\ref{ass:aao:lsc}.
\end{itemize}
\item[\ref{asssum:convM}]
\begin{itemize}
\item in case of (aao)
is trivial for $\Mad(y)=X\times V$; note that in the reduced case we explicitely assume $\topo$-closedness of $\calD$.
\item in case of (Maao) 
can be concluded by Assumption \ref{ass:aao}\ref{ass:aao:rho} and using Assumption \ref{ass:aao}\ref{ass:aao:lsc}\ref{ass:aao:Scont} to obtain
\[
\begin{aligned}
&\calS(C(\hat{u}),y)\leq\liminf_{n\to\infty} \calS(C(u_n),y)\\
&=\liminf_{n\to\infty} \Bigl(\calS(C(u_n),y_n)  +\calS(C(u_n),y)-\calS(C(u_n),y_n)\Bigr)\\
&\leq\liminf_{n\to\infty} \Bigl(\tau\delta_n  +\calS(C(u_n),y)-\calS(C(u_n),y_n)\Bigr)=0\,,
\end{aligned}
\]
which by definiteness of $\calS$ \eqref{Sdefinite} implies $C(\hat{u})=y$.
\end{itemize}
\item[\ref{asssum:Ttaulsc},\ref{asssum:Jtaulsc}]
are explicitely imposed in Assumption \ref{ass:aao}\ref{ass:aao:lsc}
\item[\ref{asssum:Jcont},\ref{asssum:convJ}]
\begin{itemize}
\item in case of (aao)
are guaranteed by Assumption \ref{ass:aao}\ref{ass:aao:Scont}
\item in case of (Maao)
hold automatically, since $\calJ(x,u,y)=\calQ_A(x,u)$ does not depend on $y$ here
\end{itemize}
\item[\ref{asssum:convR}]
\begin{itemize}
\item in case of (aao)
follows from
\[
\calJ(\xdag,\udag;\ydel)-\calJ(\xdag,\udag;y)=\calS(C(\udag),\ydel)-0\leq\delta
\] 
by \eqref{delta} and from
\[
\calJ(\xdag,\udag;y)-\calJ(\tilde{x}^\delta,\tilde{u}^\delta;\ydel)=0-\calJ(\tilde{x}^\delta,\tilde{u}^\delta;\ydel)\leq0
\]
by \eqref{Sdefinite}, \eqref{Qdefinite}, as well as \eqref{alphared}.
\item in case of (Maao), due to  independence of $J$ on $y$ we only have to verify the second condition, which immediately follows from 
\[
\calJ(\xdag,\udag;y)-\calJ(\tilde{x}^\delta,\tilde{u}^\delta;\ydel)
=0-\calQ_A(\tilde{x}^\delta,\tilde{u}^\delta)\leq0\,.
\]
by \eqref{Qdefinite}.
\end{itemize}
\end{itemize}
\end{proof}

\subsubsection{Variational approach for EIT}\label{subsec:KV_rev}

Recall the setting of Example \ref{exKV} 
\begin{equation}\label{settingEIT}
\begin{aligned}
&x=\sigma\,,\
u=(\Phi,\Psi)=(\phi_1,\ldots,\phi_I,\psi_1,\ldots,\psi_I)\,,\
y=(\Volt,\Curr)=(\volt_1,\ldots,\volt_I,\curr_1,\ldots,\curr_I)\,,\\
&X=L^\infty(\Omega)\,, \quad V=\setof{(\Phi,\Psi)\in H^1(\Omega)^{2I}}{\trace(\Phi,\Psi)\in Y}\,, \\
&\calR(\sigma,\Phi,\Psi)=\tfrac12\|(\Phi,\Psi)\|_{{H^{3/2-\epsilon}(\Omega)}^{2I}}^2\,,\quad
\calRt(\sigma,\Phi,\Psi)=\|\sigma-\tfrac{\ol{\sigma}+\ul{\sigma}}{2}\|_{L^\infty(\Omega)}\,,\quad \rho=\tfrac{\ol{\sigma}-\ul{\sigma}}{2}\,,
\end{aligned}
\end{equation}

We first of all consider the all-at-once version written as a minimization problem \eqref{EITbox}, i.e., \eqref{minJR} with 
\begin{equation}\label{JbMad}
\begin{aligned}
&\calJ(\sigma,\Phi,\Psi)=
\sum_{i=1}^{I}\tfrac12\int_\Omega |\sqrt{\sigma}\nabla \phi_i-\frac{1}{\sqrt{\sigma}}\nabla^\bot\psi_i|^2\, dx
\\
&\Mad^\delta((\Volt^\delta,\Curr^\delta))
=\setof{\sigma\in L^\infty(\Omega)}{\ul{\sigma}\leq \sigma\leq\ol{\sigma}\mbox{ a.e. in }\Omega}\\
&\hspace*{2.5cm}\times \setof{(\Phi,\Psi)\in H^1(\Omega)^{2I}}{\|\trace(\Phi,\Psi)-(\Volt^\delta,\Curr^\delta)\|_{L^\infty(\partial\Omega)}\leq \tau\delta}\,,
\end{aligned}
\end{equation}
with $\epsilon\in(0,\frac12)$ (where a total variation regularization term $\|\sigma\|_{TV}$ may or may not be added), which according to Remark \ref{rem:exKV-aaoM} can be treated as a special case of Subsection \ref{subsec:redaaoMaao} with
\[
\begin{aligned} 
&A(\sigma,\Phi,\Psi)=\left(\sqrt{\sigma}\nabla\phi_1-\tfrac{1}{\sqrt{\sigma}}\nabla^\bot\psi_1,\ldots,\sqrt{\sigma}\nabla\phi_I-\tfrac{1}{\sqrt{\sigma}}\nabla^\bot\psi_I\right)\,, \\
&W=L^2(\Omega)^{I}\,, \quad
\calQ_A(\sigma,\Phi,\Psi)=\frac12\|A(\sigma,\Phi,\Psi)\|_{L^2(\Omega)^{I}}^2\,,\\
&C=\trace(\Phi,\Psi)\,, \quad Y=L^\infty(\partial\Omega)^{2I}\,, \quad 
\calS(y_1,y_2)=\|y_1-y_2\|_Y\,.
\end{aligned}
\]
The reason for using the $L^\infty$ discrepancy is the computationally advantageous formulation as a bound constrained minimization problem \eqref{EITbox}.

The topology needed in Assumption \ref{ass:aao} can be defined by 
\begin{equation}\label{topoEIT}
(\sigma_n,\Phi_n,\Psi_n)\stackrel{\topo}{\to}(\sigma,\Phi,\Psi)\ \Leftrightarrow \
\begin{cases}
\sigma_n\stackrel{*}{\rightharpoonup}\sigma \mbox{ and }\frac{1}{\sigma_n}\stackrel{*}{\rightharpoonup}\frac{1}{\sigma} \mbox{ in }L^\infty(\Omega)\,, \\
(\Phi_n,\Psi_n)\to(\Phi,\Psi) \mbox{ in }H^{2-3\epsilon/2}(\Omega)^{2I}\\
(\Phi_n,\Psi_n)\rightharpoonup(\Phi,\Psi) \mbox{ in }H^{3/2-\epsilon}(\Omega)^{2I}\,, \\
\trace(\Phi_n,\Psi_n)\to\trace(\Phi,\Psi) \mbox{ in }L^\infty(\partial\Omega)^{2I}
\end{cases}
\end{equation}
and the norm in which stability can be achieved, is obviously given by
\begin{equation}\label{normBEIT}
\|(\sigma,\Phi,\Psi)\|_B:=
\|\sigma\|_{L^\infty(\Omega)}
+\|\Phi,\Psi)\|_{H^{3/2-\epsilon}(\Omega)^{2I}}^2
+\|\trace(\Phi,\Psi)\|_{L^\infty(\partial\Omega)^{2I}}\,.
\end{equation}

Therewith, one can now verify Assumption \ref{ass:aao}, presuming
\begin{equation}\label{sigmadagger}
\ul{\sigma}\leq\sigma^\dagger\leq\ol{\sigma}\mbox{ a.e. in }\Omega
\end{equation}
and
\begin{equation}\label{assconv:finiteEIT}
(\Phi^\dagger,\Psi^\dagger)\in {H^{3/2-\epsilon}(\Omega)}^{2I}\,,
\end{equation}
%where in case of additional $TV$ regularization, we additionally assume $\sigma\in BV(\Omega)$.
which leads to the following result.
\begin{corollary}
Let \eqref{sigmadagger}, \eqref{assconv:finiteEIT}, \eqref{delta} hold. 

Then for any $\alpha>0$, a minimizer of \eqref{minJR} with \eqref{JbMad} exists
and for any sequence $((\Volt_n,\Curr_n))_{n\in\N}\subseteq Y\mbox{ with } (\Volt_n,\Curr_n)\to (\Volt^\delta,\Curr^\delta)\mbox{ in }Y$ as $n\to\infty$, the sequence of corresponding regularized minimizers is $\|\cdot\|_B$ bounded, for $\|\cdot\|_B$ as in \eqref{normBEIT}.

Assume additionally that a regularization parameter choice satisfying 
$\alpha(\delta,\ydel)\to 0$
%and $\frac{\delta}{\alpha(\delta,\ydel)}\leq c$
as $\delta\to0$
%for some $c>0$ independent of $\delta$, 
is employed.

Then, as $\delta\to0$, $(\Volt^\delta,\Curr^\delta)\to (\Volt,\Curr)$ in Y, the family $(\tilde{\sigma}^\delta,\tilde{\Phi}^\delta,\tilde{\Psi}^\delta):=$ \\$(\sigma_{\alpha(\delta,\ydel)}^\delta,\Phi_{\alpha(\delta,\ydel)}^\delta,\Psi_{\alpha(\delta,\ydel)}^\delta)$
has a $\topo$ convergent subsequence and the limit of every $\topo$ convergent subsequence solves the inverse problem with exact data \eqref{Fxuy}. 
\end{corollary}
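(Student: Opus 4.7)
The strategy is to identify this EIT setting as a special case of the Morozov-type all-at-once regularization \eqref{minJRaaoM}, as already noted in Remark \ref{rem:exKV-aaoM}, and thus to reduce the claim to Corollary \ref{cor:convredaaoMaao}. It therefore suffices to verify Assumption \ref{ass:aao} with the topology $\topo$ from \eqref{topoEIT} and the norm $\|\cdot\|_B$ from \eqref{normBEIT}; the definiteness conditions \eqref{Sdefinite} and \eqref{Qdefinite} are immediate. Items (i) and (ii) follow from \eqref{sigmadagger} and \eqref{assconv:finiteEIT}, together with $\calR\geq 0$, and item (v) is the uniform reverse triangle inequality $|\calS(z,\hat y)-\calS(z,y)|\leq\|\hat y-y\|_Y$, independent of $z$.

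For item (iii) I take a sequence $(\sigma_n,\Phi_n,\Psi_n)$ in a joint sublevel set of $\calQ_A,\calR,\calRt$ and $\calS(C(\cdot),z)$. Finiteness of $\calRt$ together with the specific values of $\calRt$ and $\rho$ forces $\ul{\sigma}\leq\sigma_n\leq\ol{\sigma}$ a.e., so both $\sigma_n$ and $1/\sigma_n$ lie in a fixed $L^\infty(\Omega)$ ball; Banach--Alaoglu, followed by extraction of an a.e.\ convergent subsequence and the usual identification of the two weak-$\ast$ limits, delivers both convergences required in \eqref{topoEIT}. Finiteness of $\calR$ gives a uniform $H^{3/2-\epsilon}$ bound on $(\Phi_n,\Psi_n)$, and the compact Sobolev embeddings available for $\epsilon\in(0,\tfrac12)$ in dimension $d=2$, together with the compactness of the boundary trace into $L^\infty(\partial\Omega)$, yield the remaining strong convergences in \eqref{topoEIT}; the $\|\cdot\|_B$ bound is read off from the same estimates. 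For the $\topo$-lower semicontinuity in (iv), weak-$\ast$ lower semicontinuity of $\|\cdot\|_{L^\infty}$ handles $\calRt$, weak lower semicontinuity of the squared Hilbert norm handles $\calR$, and the strong $L^\infty(\partial\Omega)$ trace convergence combined with the reverse triangle inequality handles $(x,u)\mapsto\calS(C(u),z)$.

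The delicate term is $\calQ_A$. Expanding the square,
\[
\calQ_A(\sigma,\Phi,\Psi)=\tfrac12\sum_{i=1}^I\int_\Omega\Bigl(\sigma|\nabla\phi_i|^2+\tfrac{1}{\sigma}|\nabla^\bot\psi_i|^2-2\nabla\phi_i\cdot\nabla^\bot\psi_i\Bigr)\,dx,
\]
the cross term passes to the limit by strong $L^2$ convergence of $\nabla\phi_{i,n}$ and $\nabla^\bot\psi_{i,n}$, inherited from the strong Sobolev convergence in \eqref{topoEIT}. For the first term I split
\[
\int_\Omega\sigma_n|\nabla\phi_{i,n}|^2\,dx=\int_\Omega\sigma_n|\nabla\phi_i|^2\,dx+\int_\Omega\sigma_n\bigl(|\nabla\phi_{i,n}|^2-|\nabla\phi_i|^2\bigr)\,dx,
\]
where the first summand converges to $\int_\Omega\sigma|\nabla\phi_i|^2\,dx$ by testing the weak-$\ast$ convergence $\sigma_n\stackrel{*}{\rightharpoonup}\sigma$ against the fixed $L^1$ function $|\nabla\phi_i|^2$, and the second vanishes by the uniform $L^\infty$ bound on $\sigma_n$ combined with the strong $L^1$ convergence of $|\nabla\phi_{i,n}|^2$ (a consequence of strong $L^2$ convergence of the gradients). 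The symmetric argument for $\int\tfrac{1}{\sigma_n}|\nabla^\bot\psi_{i,n}|^2\,dx$, now invoking $1/\sigma_n\stackrel{*}{\rightharpoonup} 1/\sigma$, in fact shows $\topo$-continuity of $\calQ_A$. The hard part is precisely this simultaneous identification of the weak-$\ast$ limits of $\sigma_n$ and $1/\sigma_n$ and the $L^\infty$--$L^1$ limit passage inside $\calQ_A$; both succeed because of the two-sided pointwise bounds coming from $\calRt\leq\rho$, which is the reason the box constraints on $\sigma$ are indispensable. Once Assumption \ref{ass:aao} is verified, the three conclusions of Corollary \ref{cor:convredaaoMaao} yield existence, $\|\cdot\|_B$-boundedness and subsequential $\topo$-convergence to a solution of \eqref{Fxuy}, noting that in the Morozov setting \eqref{alphared} simplifies to $\alpha(\delta,y^\delta)\to 0$.
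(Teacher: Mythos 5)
Your overall route is the same as the paper's: view \eqref{EITbox} as an instance of the Morozov type all-at-once formulation \eqref{minJRaaoM} via Remark \ref{rem:exKV-aaoM}, verify Assumption \ref{ass:aao} for the topology \eqref{topoEIT} and the norm \eqref{normBEIT}, and conclude from Corollary \ref{cor:convredaaoMaao} with the second condition in \eqref{alphared} dropped. Your handling of items \ref{ass:aao:rho}, \ref{ass:aao:finite}, \ref{ass:aao:lsc} and \ref{ass:aao:Scont} of Assumption \ref{ass:aao} coincides with the paper's, including the expansion of $\calQ_A$ into the two energy terms plus cross term and the $L^\infty$--$L^1$ pairing argument (the paper's \eqref{estJb}); for item \ref{ass:aao:lsc} this is legitimate because $\topo$-convergence \emph{includes by definition} both $\sigma_n\stackrel{*}{\rightharpoonup}\sigma$ and $1/\sigma_n\stackrel{*}{\rightharpoonup}1/\sigma$.

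The gap is in your verification of the compactness item \ref{ass:aao:compact}, where these two weak-$*$ convergences must be \emph{produced} for a subsequence. You claim that the bounds $\ul{\sigma}\le\sigma_n\le\ol{\sigma}$ a.e.\ allow extraction of an a.e.\ convergent subsequence, from which the identification of the weak-$*$ limit of $1/\sigma_n$ with the reciprocal of the weak-$*$ limit of $\sigma_n$ follows. This step is false: an $L^\infty$-bounded sequence need not possess any a.e.\ convergent subsequence, and the identification genuinely fails under oscillation. Concretely, let $\sigma_n$ take the values $\ul{\sigma}$ and $\ol{\sigma}$ on alternating stripes of width $2^{-n}$, paired with $\Phi_n=\Psi_n=0$ (so that $\calQ_A=0$, $\calR=0$, $\calRt=\rho$, and $\calS(C(0),z)=\|z\|_Y$ is fixed, i.e.\ the sequence lies in a sublevel set $L_c(f)$). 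Then every subsequence satisfies $\sigma_n\stackrel{*}{\rightharpoonup}\tfrac{\ul{\sigma}+\ol{\sigma}}{2}$ and $1/\sigma_n\stackrel{*}{\rightharpoonup}\tfrac12\bigl(1/\ul{\sigma}+1/\ol{\sigma}\bigr)$, and these two limits are not reciprocals of each other (strict AM--HM inequality), so no subsequence is $\topo$-convergent; in particular no subsequence can converge a.e., since an a.e.\ limit together with the uniform bounds would force exactly the reciprocal relation you want. So the mechanism you propose cannot work. To be fair, the paper's own proof of item \ref{ass:aao:compact} only establishes the $(\Phi,\Psi)$-part of \eqref{topoEIT} (compact embedding $H^{3/2-\epsilon}(\Omega)\hookrightarrow H^{(4-2\epsilon)/3}(\Omega)$ plus continuity of the trace) and is silent on the $\sigma$-component, so you correctly located the delicate point that the paper glosses over --- but it is not repaired by the a.e.\ argument; closing it requires genuinely stronger compactness for $\sigma$, e.g.\ the additional $H^{1-\tilde{\epsilon}}(\Omega)$ regularization of $\sigma$ that the paper introduces later in \eqref{REITsigma}, or a weakening of the topology on the $\sigma$-component compatible with lower semicontinuity of $\calQ_A$.
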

\begin{proof} 
First of all, we point out that for the Morozov type all-at-once formulation relevant here, the second condition in \eqref{alphared} is not required, but $\alpha(\delta)$ has to be strictly positive to guarantee existence of minimizers of the regularized problems.

Now we verify each item of Assumption \ref{ass:aao}.
\begin{itemize}
\item[\ref{ass:aao:rho}] 
follows from \eqref{sigmadagger}
\item[\ref{ass:aao:finite}]
follows from \eqref{assconv:finiteEIT} and nonnegativity of $\calR$
\item[\ref{ass:aao:compact}] Boundedness  of $\calR(\sigma,\Phi,\Psi)$, $\calRt(\sigma,\Phi,\Psi)$, and $\calS(C(\Phi,\Psi),y)$ obviously implies boundedness of $\|(\sigma,\Phi,\Psi)\|_B$.

By compactness of the embeddings $H^{3/2-\epsilon}(\Omega)\hookrightarrow H^{(4-2\epsilon)/3}(\Omega)$,
and continuity of the trace and embedding operators 
$H^{(4-2\epsilon)/3}(\Omega)$ $\to$ $H^{(5-4\epsilon)/6}(\partial\Omega)$ $\to$ $L^\infty(\partial\Omega)$ for $\epsilon\in(0,\frac12)$ the boundedness $\|(\Phi_n,\Psi_n)\|_{H^{3/2-\epsilon}(\Omega)^{2I}}^2\leq c$ implies strong convergence of 
$\phi_{n,i}$ and $\psi_{n,i}$ in $H^{(4-2\epsilon)/3}(\Omega)$ and of 
$\trace\phi_{n,i}$ and $\trace\psi_{n,i}$ in $L^\infty(\partial\Omega)$
along a subsequence.
\item[\ref{ass:aao:lsc}] 
%for all $z\in Y$, $\alpha\in\R_+^2$, the mappings $\calRt$, $(x,u)\mapsto\calS(C(u),z)$, $\calQ\circ A$, and $\calQ\circ A+\alpha\cdot\calR$ are $\topo$ lower semicontinuous
$\topo$ lower semicontinuity of 
\begin{itemize}
\item[$\bullet$] $\calQ_A$ --- actually even $\topo$ continuity --- follows from 
\[
\begin{aligned}
&|\calQ_A(\sigma_n,\Phi_n,\Psi_n)-\calQ_A(\hat{\sigma},\hat{\Phi},\hat{\Psi})|\\
&=
|\sum_{i=1}^{I}\tfrac12\int_\Omega \Bigl(
|\sqrt{\sigma_n}\nabla \phi_{n,i}-\frac{1}{\sigma_n}\nabla^\bot \psi_{n,i}|^2
-|\hat{\sigma}\nabla \hat{\phi}_i-\frac{1}{\hat{\sigma}}\nabla^\bot \hat{\psi}_i|^2\Bigr)\, dx|\\
&=|\sum_{i=1}^{I}\tfrac12\int_\Omega \Bigl(
\sigma_n|\nabla \phi_{n,i}|^2-\hat{\sigma}|\nabla \hat{\phi}_i|^2
+\frac{1}{\sigma_n}|\nabla^\bot \psi_{n,i}|^2-\frac{1}{\hat{\sigma}}|\nabla^\bot \hat{\psi}_i|^2\\
&\qquad\qquad-2\nabla\phi_{n,i}\cdot\nabla^\bot\psi_{n,i}+2\nabla\hat{\phi}\cdot\nabla^\bot\hat{\psi}\Bigr)\, dx|
\end{aligned}
\]
where from $\topo$ convergence of $(\sigma_n,\Phi_n,\Psi_n)$ to $(\hat{\sigma},\hat{\Phi},\hat{\Psi})$ we can conclude
\begin{equation}\label{estJb}
\hspace*{-0.5cm}
\begin{aligned}
&|\int_\Omega \hat{\sigma}|\nabla \hat{\phi}_i|^2-\sigma_n|\nabla \phi_{n,i}|^2|\\
&\leq |\int_\Omega \Bigl(\hat{\sigma}-\sigma_n)|\nabla \hat{\phi}_i|^2\, dx|
+|\int_\Omega \sigma_n(|\nabla \hat{\phi}_i|^2-|\nabla \phi_{n,i}|^2)\, dx|\\ 
&\leq |\int_\Omega \Bigl(\hat{\sigma}-\sigma_n)|\nabla \hat{\phi}_i|^2\, dx|
+\overline{\sigma}(\|\hat{\phi}_i\|_{H^1(\Omega)}+\|\phi_{n,i}\|_{H^1(\Omega)})\|\hat{\phi}_i-\phi_{n,i}\|_{H^1(\Omega)})\\
&\to0 \mbox{ as }n\to\infty
\end{aligned}
\end{equation}
since $|\nabla \hat{\phi}_i|^2\in L^1(\Omega)$, and analogously for the $\frac{1}{\sigma}\nabla^\bot\psi$ part, as well as
\[
\hspace*{-0.5cm}
\begin{aligned}
&|\int_\Omega (\nabla\phi_{n,i}\cdot\nabla^\bot\psi_{n,i}
-\nabla\hat{\phi}\cdot\nabla^\bot\hat{\psi})\, dx|\\
&\leq\int_\Omega \nabla\phi_{n,i}\cdot(\nabla^\bot\psi_{n,i}-\nabla^\bot\hat{\psi})
+(\nabla\phi_{n,i}-\nabla\hat{\phi})\cdot\nabla^\bot\hat{\psi})\, dx|\\
&\leq 
\|\nabla\phi_{n,i}\|_{L^2(\Omega)}\| \nabla^\bot\psi_{n,i}-\nabla^\bot\hat{\psi}\|_{L^2(\Omega)}
+\|\nabla\phi_{n,i}-\nabla\hat{\phi}\|_{L^2(\Omega)}\|\nabla^\bot\hat{\psi}\|_{L^2(\Omega)}\\
&\to0 \mbox{ as }n\to\infty
\end{aligned}
\]
\item[$\bullet$] 
%$\calQ\circ A+\alpha\cdot\calR$ from the previous item and $\topo$ lower semicontinuity of 
$\calR=\|\cdot\|_{H^{3/2-\epsilon}(\Omega)^{2I}}^2$ is obvious.
%(Note that the sum of a continuous and a lower semicontinuous function is lower semicontinuous.)  
\item[$\bullet$] $\calRt$ follows from weak* lower semicontinuity of the $L^\infty$ norm 
\item[$\bullet$] $(x,u)\mapsto\calS(C(u),z)$ follows from continuity of the $L^\infty(\partial\Omega)$ norm and strong convergence of the traces according to the last line of \eqref{topoEIT}
\end{itemize}
\item [\ref{ass:aao:Scont}] 
%The family of mappings $(\calS(z,\cdot):Y\to\R)_{z\in Y}$ is uniformly continuous at $y$, i.e. 
%$\lim_{\tilde{y}\to y} \sup_{z\in Y}|\calS(z,\tilde{y})-\calS(z,y)|=0$.
follows from the reverse triangle inequality in $L^\infty(\partial\Omega)$:
\[
|\calS((\Volt,\Curr),(\Volt^1,\Curr^1))
-\calS((\Volt,\Curr),(\Volt^2,\Curr^2))|
\leq \|(\Volt^1,\Curr^1)-(\Volt^2,\Curr^2)\|_{L^\infty(\partial\Omega)^{2I}}
\] 
\end{itemize}
\end{proof}

\medskip

For the formulation based on the Kohn-Volgelius functional we use \eqref{settingEIT} with
\begin{equation}\label{JaMad}
\begin{aligned}
&\calJKV(\sigma,\Phi,\Psi)
=\tfrac12\sum_{i=1}^{I}\int_\Omega \Bigl(\sigma|\nabla \phi_i|^2+\frac{1}{\sigma}|\nabla^\bot \psi_i|^2\Bigr)\, dx\,,
\\
&\Mad^\delta((\Volt^\delta,\Curr^\delta))
=\setof{\sigma\in L^\infty(\Omega)}{\ul{\sigma}\leq \sigma\leq\ol{\sigma}\mbox{ a.e. in }\Omega}\\
&\hspace*{2.5cm}\times \setof{(\Phi,\Psi)\in H^1(\Omega)^{2I}}{\|\trace(\Phi,\Psi)-(\Volt^\delta,\Curr^\delta)\|_Y\leq \tau\delta}\,,
\end{aligned}
\end{equation}
with the slightly stronger space and misfit functional
\begin{equation}\label{YJKV} 
Y=L^\infty(\partial\Omega)^{I}\times W^{1,1}(\partial\Omega)^I\,, \quad 
\calS(y_1,y_2)=\|y_1-y_2\|_Y\,,
\end{equation}
(note that \eqref{delta} with $\calS$, $Y$ as in \eqref{YJKV} via the relation \eqref{intj} corresponds to an $L^1$ noise level on the observe currents $j_i$), 
as well as condition \eqref{alphared} on the regularization parameter choice, and directly verify Assumption \ref{asssum}, which yields the following result.
\begin{corollary}
Let \eqref{sigmadagger}, \eqref{assconv:finiteEIT}, \eqref{delta} hold. 

Then for any $\alpha>0$, a minimizer of \eqref{minJR} with \eqref{JaMad} exists
and for any sequence $((\Volt_n,\Curr_n))_{n\in\N}\subseteq Y\mbox{ with } (\Volt_n,\Curr_n)\to (\Volt^\delta,\Curr^\delta)\mbox{ in }Y$ as $n\to\infty$, the sequence of corresponding regularized minimizers is $\|\cdot\|_B$ bounded, for $\|\cdot\|_B$ as in \eqref{normBEIT}.

Assume additionally that a regularization parameter choice satisfying 
$\alpha(\delta,\ydel)\to 0$
and $\frac{\delta}{\alpha(\delta,\ydel)}\leq c$
as $\delta\to0$
for some $c>0$ independent of $\delta$, 
is employed.

Then, as $\delta\to0$, $(\Volt^\delta,\Curr^\delta)\to (\Volt,\Curr)$, the family $(\tilde{\sigma}^\delta,\tilde{\Phi}^\delta,\tilde{\Psi}^\delta):=$\\ $(\sigma_{\alpha(\delta,\ydel)}^\delta,\Phi_{\alpha(\delta,\ydel)}^\delta,\Psi_{\alpha(\delta,\ydel)}^\delta)$
has a $\topo$ convergent subsequence and the limit of every $\topo$ convergent subsequence solves the inverse problem with exact data \eqref{Fxuy}. 
\end{corollary}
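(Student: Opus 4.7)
The plan is to verify each item of Assumption \ref{asssum} for the setting \eqref{settingEIT}, \eqref{JaMad}, \eqref{YJKV}, closely following the proof of the previous corollary (the Morozov-type all-at-once case with $\calQ_A=\tfrac12\|A\|_W^2$). The admissible set here has the same structure as there---$L^\infty$ box constraints on $\sigma$ together with a Morozov-type constraint on $\trace(\Phi,\Psi)$ in $Y$---so items \ref{asssum:admissible}, \ref{asssum:finite}, \ref{asssum:compact}, \ref{asssum:Tcoercive}, \ref{asssum:convM}, \ref{asssum:Ttaulsc} transfer with only cosmetic changes. Items \ref{asssum:Jcont} and \ref{asssum:convJ} are trivial, as $\calJKV$ does not depend on $y$. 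For $\topo$-lower semicontinuity of $\calJKV$ (item \ref{asssum:Jtaulsc}), I would repeat the argument used for $\calQ_A$ in the previous proof almost verbatim: weak-$*$ convergence of $\sigma_n$ and $1/\sigma_n$ in $L^\infty(\Omega)$ combined with strong convergence of $(\Phi_n,\Psi_n)$ in $H^1(\Omega)^{2I}$, both built into $\topo$ on a bounded $H^{3/2-\epsilon}$ set, yield continuity of the two quadratic integrals in \eqref{JEIT}.

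The only item demanding a genuinely new argument is \ref{asssum:convR}. Its first inequality is trivial, again by $y$-independence of $\calJKV$. For the second, the Morozov-case estimate (which relied on $\calQ_A(\xdag,\udag)=0$) breaks down, since $\calJKV(\sigma^\dagger,\Phi^\dagger,\Psi^\dagger)$ does not vanish---it equals the dissipated power $\sum_i\int_{\partial\Omega} j_i\volt_i\,ds$. My plan is to exploit the algebraic identity
\begin{equation*}
\calJKV(\sigma,\Phi,\Psi)
= \tfrac{1}{2}\|A(\sigma,\Phi,\Psi)\|_W^2
+ \sum_{i=1}^{I}\int_{\partial\Omega}\phi_i\,\nabla^\bot\psi_i\cdot\nu\,ds\,,
\end{equation*}
valid for every $(\sigma,\Phi,\Psi)$ without any PDE or boundary constraint, obtained by expanding the square in the definition of $A$ in \eqref{AWCEIT} and using the integration-by-parts computation \eqref{intparts} together with $\nabla\cdot\nabla^\bot=0$. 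Writing the boundary term as $B(\Phi,\Psi)$, using $A(\sigma^\dagger,\Phi^\dagger,\Psi^\dagger)=0$, and dropping the nonnegative $\tfrac12\|A(\tilde\sigma,\tilde\Phi,\tilde\Psi)\|_W^2$, one obtains
\begin{equation*}
\calJKV(\sigma^\dagger,\Phi^\dagger,\Psi^\dagger) - \calJKV(\tilde\sigma,\tilde\Phi,\tilde\Psi)
\leq B(\Phi^\dagger,\Psi^\dagger) - B(\tilde\Phi,\tilde\Psi)\,.
\end{equation*}

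Since $\nabla^\bot\psi\cdot\nu$ coincides with $d\psi/ds$ on $\partial\Omega$, the form $B$ pairs $L^\infty(\partial\Omega)^I$ with $W^{1,1}(\partial\Omega)^I$---exactly the two slots of $Y$ in \eqref{YJKV}. Splitting
\begin{equation*}
B(\Phi^\dagger,\Psi^\dagger) - B(\tilde\Phi,\tilde\Psi)
= \sum_{i=1}^{I}\int_{\partial\Omega}(\phi_i^\dagger-\tilde\phi_i)\,\tfrac{d\psi_i^\dagger}{ds}\,ds
+ \sum_{i=1}^{I}\int_{\partial\Omega}\tilde\phi_i\,\tfrac{d(\psi_i^\dagger-\tilde\psi_i)}{ds}\,ds\,,
\end{equation*}
estimating each summand by $L^\infty$--$L^1$ duality, and using \eqref{delta} together with the Morozov constraint in \eqref{JaMad} to bound the trace differences by $(1+\tau)\delta$, while using the stability bound from \eqref{normBEIT} to control $\|\tilde\Phi\|_{L^\infty(\partial\Omega)^I}$ and noting $\|\Psi^\dagger\|_{W^{1,1}(\partial\Omega)^I}=\|\Curr\|_{W^{1,1}(\partial\Omega)^I}<\infty$ (exact data being in $Y$), we arrive at $\calJKV(\sigma^\dagger,\Phi^\dagger,\Psi^\dagger) - \calJKV(\tilde\sigma,\tilde\Phi,\tilde\Psi) \leq C\delta$. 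Division by $\alpha(\delta,\ydel)$ together with the rate condition $\delta/\alpha(\delta,\ydel)\leq c$ then furnishes the required finite limsup.

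The main obstacle I anticipate is topological: for item \ref{asssum:closed} the $W^{1,1}(\partial\Omega)$ side of the Morozov constraint on $\trace\Psi$ must be preserved under $\topo$-limits, whereas \eqref{topoEIT} as stated only provides $L^\infty(\partial\Omega)$ convergence of traces. I would remedy this by augmenting $\topo$ with weak-$*$ convergence of the tangential derivatives of $\trace\Psi_n$ as Radon measures on $\partial\Omega$, which is compatible with the uniform $H^{3/2-\epsilon}(\Omega)^{2I}$-bound coming from finiteness of $\calR$ via the trace theorem; weak-$*$ lower semicontinuity of the total variation then closes the admissible set and preserves the above $L^\infty$--$L^1$ estimate in the limit.
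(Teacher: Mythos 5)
Your proposal is correct and takes essentially the same route as the paper's own proof: the same item-by-item verification of Assumption \ref{asssum}, with the only substantive step being item \ref{asssum:convR}, for which the paper uses exactly your key identity $\calJKV=\calQ_A+\sum_{i=1}^I\int_{\partial\Omega}\phi_i\,\nabla^\bot\psi_i\cdot\nu\,ds$ (via \eqref{intparts}), drops the nonpositive $-\calQ_A(\tilde{\sigma}^\delta,\tilde{\Phi}^\delta,\tilde{\Psi}^\delta)$ using $\calQ_A(\sigma^\dagger,\Phi^\dagger,\Psi^\dagger)=0$, and estimates the split boundary-term difference by the $L^\infty(\partial\Omega)$--$W^{1,1}(\partial\Omega)$ pairing to obtain a bound of order $\delta$, which the condition $\delta/\alpha(\delta,\ydel)\leq c$ then controls. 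Your closing concern about $\topo$-closedness of the $W^{1,1}(\partial\Omega)$ part of the Morozov constraint is a genuine subtlety that the paper glosses over (it merely invokes weak* lower semicontinuity of the $Y$ norm, although $W^{1,1}$ is not a dual space), and your augmentation of $\topo$ by weak* convergence of tangential derivatives of the traces as Radon measures, with lower semicontinuity of the total variation, is a sensible way to make that step rigorous.
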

\begin{proof}

\begin{itemize}
\item[\ref{asssum:admissible}]
follows from \eqref{delta}, $\tau>1$ and \eqref{sigmadagger}.
\item[\ref{asssum:finite}]
is explicitely imposed by \eqref{assconv:finiteEIT}.
\item[\ref{asssum:compact},\ref{asssum:Tcoercive}]
follow like in case of $\calJ=\calQ_A$ by definition of $\topo$.
\item[\ref{asssum:closed}] 
follows from weak* lower semicontinuity of the $L^\infty(\Omega)$ and the $Y$ norm.
\item[\ref{asssum:convM}]
follows from \eqref{sigmadagger} and the triangle inequality in $L^\infty(\partial\Omega)$, which implies 
\[
\begin{aligned}
&\|\trace\hat{\phi}_i-\volt_i\|_{L^\infty}\\
&\leq
\|\trace\hat{\phi}_i-\trace\phi_{n,i}\|_{L^\infty}
+\|\trace\phi_{n,i}-\volt_{n,i}\|_{L^\infty}
+\|\volt_{n,i}-\volt_i\|_{L^\infty}\\ 
&\to0\mbox{ as }n\to\infty
\end{aligned}
\]
and likewise for $\|\trace\hat{\psi}_i-\curr_i\|_{W^{1,1}}$.
\item[\ref{asssum:Ttaulsc}] follows from the definition of $\calR$ as a norm and of $\topo$.
\item[\ref{asssum:Jtaulsc}]
like $\calJ=\calQ_A$ (cf. \eqref{estJb}), also $\calJKV$ is even $\topo$ continuous.
\item[\ref{asssum:Jcont},\ref{asssum:convJ}]
also here follows from independence of $\calJKV$ on the data $y=(\Volt,\Curr)$
\item[\ref{asssum:convR}]
again, due to  independence of $\calJKV$ on $y$ we only have to verify the second condition, which here  follows from 
\[
\begin{aligned}
&\calJKV(\sigma^\dagger,\Phi^\dagger,\Psi^\dagger)
-\calJKV(\tilde{\sigma}^\delta,\tilde{\Phi}^\delta,\tilde{\Psi}^\delta)\\
&=\calQ_A(\sigma^\dagger,\Phi^\dagger,\Psi^\dagger)
-\calQ_A(\tilde{\sigma}^\delta,\tilde{\Phi}^\delta,\tilde{\Psi}^\delta)\\
&\quad+\sum_{i=1}^I\int_{\partial\Omega}\phi^\dagger_i\nabla^\bot\psi^\dagger_i\cdot\nu
-\tilde{\phi}^\delta_i\nabla^\bot\tilde{\psi}^\delta_i\cdot\nu\, ds\\
&\leq 0+\sum_{i=1}^I\|\trace\phi^\dagger_i-\trace\tilde{\phi}^\delta_i\|_{L^\infty(\partial\Omega)}
\|\trace\tilde{\psi}^\delta_i\|_{W^{1,1}(\partial\Omega)}\\
&\quad+
\|\trace\phi^\dagger_i\|_{L^\infty(\partial\Omega)}
\|\trace\psi^\dagger_i-\trace\tilde{\psi}^\delta_i\|_{W^{1,1}(\partial\Omega)}\\
&\leq \tau\delta (\|\trace{\Psi}^\dagger\|_{W^{1,1}(\partial\Omega)}^{I}+\delta
+\|\trace\Phi^\dagger\|_{L^\infty(\partial\Omega)^I})
\end{aligned}
\]
cf. \eqref{intparts}.
\end{itemize}
\end{proof}

Note that for the last item we needed to use the $W^{1,1}(\partial\Omega)$ topology to bound the discrepancy of the boundary data for $\Psi$. Therefore formulation as a box constrained minimization problem like \eqref{EITbox} is not possible here.

\subsection{Convexity at the minimizer}\label{subsec:convexity}
A crucial prerequisite for fast convergence of minimization algorithms is positivity of the Hessian of the Lagrange function at the searched for minimizer.

Nonnegative definiteness of the Hessian of the Lagrangian on the critical cone is a necessary condition for a minimizer under certain assumptions on the cost functions and the constraints.  
For the original inverse problem with exact data \eqref{minJ}, this can be explicitely verified under appropriate differentiability assumptions on the involved operators. 

\revision{
We first of all consider the exact data case and restrict ourselves to functionals $\calS$, $\calQ_A$ defined by squared Hilbert space norms
\begin{equation}\label{Hilbertspacenorms}
\calS(y_1,y_2)=\frac12\|y_1-y_2\|_Y^2\,, \quad \calQ_A(x,u)=\frac12\|A(x,u)\|_W^2
\end{equation}
}

\paragraph{Reduced formulation}
For \eqref{minJred0} with $\calD=X$ (i.e., an unconstrained problem) we have, by $F(\xdag)=y$,
\[ 
\begin{aligned}
&\frac{D^2\calJ}{Dx^2}(\xdag)(h,h)=\|F'(\xdag)h\|_Y^2+(F(\xdag)-y,F''(\xdag)(h,h))_Y=\|F'(\xdag)h\|_Y^2\,.
\end{aligned}
\]

\paragraph{All-at-once formulation}
The same computation can be used for showing convexity of $\calJ(\cdot,\cdot,y)$ for \eqref{minJaao}, 
\revision{
with \eqref{Hilbertspacenorms} and $\bfF$ as in \eqref{Fxuy} in place of $F$.
}
%setting 
%\begin{equation}\label{bfF}
%\bfF(x,u)=\left(\!\!\!\!\begin{array}{c}A(x,u)\\C(u)\end{array}\!\!\right)\,, \
%\bfy=\left(\begin{array}{c}0\\y\end{array}\right)\,, \qquad
%\frac12\|(w_1,y_1)-(w_2,y_2)\|^2=\frac12\|w_1-w_2\|_W^2+\frac12\|y_1-y_2\|_Y^2\,. 
%\end{equation}

\paragraph{Morozov type all-at-once formulation}
For \eqref{minJaaoM}, considering for simplicity only linear observations $C$, we have $\mathcal{L}(x,u,z)=\frac12\|A(x,u)\|_W^2+\langle z,Cu-y\rangle_{Y^*,Y}$, hence by $A(\xdag,\udag)=0$, 
\[
\hspace*{-1cm}
\begin{aligned}
&\frac{D^2\mathcal{L}}{D(x,u)^2}(\xdag,\udag,z)((h_x,h_u),(h_x,h_u))=
\|A'_x(\xdag,\udag)h_x+A'_u(\xdag,\udag)h_u\|^2\\
&+(A(\xdag,\udag),A''_{xx}(\xdag,\udag)(h_x,h_x)+2A''_{xu}(\xdag,\udag)(h_x,h_u)+A''_{uu}(\xdag,\udag)(h_u,h_u))_W\\
&=\|A'_x(\xdag,\udag)h_x+A'_u(\xdag,\udag)h_u\|^2\geq0
\end{aligned}
\]

Due to Remark \ref{rem:exKV-aaoM}, this also includes the variational form of EIT with $\calJ(\sigma,\Phi,\Psi)$ $=\frac12\|A(\sigma,\Phi,\Psi)\|_W^2$ as in \eqref{QAEIT}.

For $\calJ=\calJKV$ as in \eqref{JEIT}, we have 
\[
\mathcal{L}(\sigma,\Phi,\Psi,z_{\Volt},z_{\Curr})
=\calJKV(\sigma,\Phi,\Psi)+\sum_{i=1}^I\int_{\partial\Omega}\Bigl((\phi_i-\volt_i)z_{\Volt,i}
+(\psi_i-\curr_i)z_{\Curr,i}\Bigr)\, ds\,,
\]
hence, by $\nabla^\bot\psi_i^\dagger=\sigma^\dagger \nabla\phi_i^\dagger$,
\[
\begin{aligned}
&\frac{D^2\mathcal{L}}{D(x,u)^2}(\sigma^\dagger,\Phi^\dagger,\Psi^\dagger,z_{\Volt},z_{\Curr})(h_\sigma,h_\Phi,h_\Psi)^2\\
&= \sum_{i=1}^I\int_\Omega\Bigl(
\frac{h_\sigma^2}{(\sigma^\dagger)^3}|\nabla^\bot\psi_i^\dagger|^2
+\sigma^\dagger|\nabla h_{\Phi,i}|^2
+\frac{1}{\sigma^\dagger}|\nabla^\bot h_{\Psi,i}|^2\\
&\qquad\qquad+h_\sigma \nabla\phi_i^\dagger\cdot\nabla h_{\Phi,i}
-\frac{h_\sigma}{(\sigma^\dagger)^2} \nabla^\bot\psi_i^\dagger\cdot\nabla^\bot h_{\Psi,i}
\Bigr)\, dx\\
&= \sum_{i=1}^I\int_\Omega\Bigl(
\frac{1}{2\sigma^\dagger}\left|\frac{h_\sigma}{\sigma^\dagger}\nabla^\bot\psi_i^\dagger+\sigma^\dagger\nabla h_{\Phi,i}\right|^2
+\frac{1}{2\sigma^\dagger}\left|\frac{h_\sigma}{\sigma^\dagger}\nabla^\bot\psi_i^\dagger-\nabla^\bot h_{\Psi,i}\right|^2\\
&\qquad\qquad+\frac{\sigma^\dagger}{2}|\nabla h_{\Phi,i}|^2
+\frac{1}{2\sigma^\dagger}|\nabla^\bot h_{\Psi,i}|^2
\Bigr)\, dx\geq0
\end{aligned}
\]

\medskip

However, convexity cannot hold in a uniform sense here, since via known estimates under sufficient second order conditions, this would imply stability of the inverse problem.
This is why these convexity results do not allow to conclude convexity of the neighboring regularized problems. To achieve the latter, one has to impose either 
\begin{itemize}
\item[(i)] certain constraints on the nonlinearity of the involved operators and/on the regularization parameter, cf., e.g., \cite{ChaventKunisch,all-at-once},  i.e., conditions that are always satisfied in case of linear operators $A$, $C$ and $F$, but impose certain constraints on the nonlinearity structure of these operators in general. 
\item[(ii)] so--called source conditions, which in applications usually can be rephrased as regularity conditions on the exact solution, combined with some smoothness of the operators $A$, $C$ and $F$.
\end{itemize}  

\medskip

To investigate convexity in the noisy data case, we here consider the simple setting of the Ivanov bounds $\calRt(x)\leq\rho$, $\calRt(x,u)\leq\rho$ leading to linear constraints (like, e.g., in \eqref{EITbox}) and the additive regularization term being quadratic 
%$\calR(x)=\frac12\|L(x-x_0)\|_Z^2$, 
$\calR(x,u)=\frac12\|L((x,u)-(x_0,u_0)\|_Z^2$ for some possibly unbounded linear operator 
%$L:X\to Z$, or 
$L:X\times V\to Z$ and some Hilbert space $Z$. 
Then, for instance, convexity at $(\xad,\uad)$ in the respective cases can be checked under the following conditions, where (i) refers to a setting with structural restrictions on the nonlinearity, while under (ii) we consider source conditions.

\paragraph{Reduced formulation}
(i) For \eqref{minJRred} with $\calD=X$, let $F$ be twice differentiable and let the following restriction on nonlinearity of $F$ 
\begin{equation}\label{nonlincond_red}
\|F''(\xad)(h,h)\|\leq \frac{2}{\sqrt{2c+\|L(\xdag-x_0)\|_Z^2}}\|F'(\xad)h\|_Y \|Lh\|_Z 
\quad \forall h\in X
\end{equation}
with $c$ as in \eqref{alphared} hold. Then by minimality 
\begin{equation}\label{minimality_red}
\begin{aligned}
&\frac12\|F(\xad)-\ydel\|^2+\frac{\alpha}{2}\|L(\xad-x_0)\|_Z^2\\
&\leq \frac12\|F(\xdag)-\ydel\|^2+\frac{\alpha}{2}\|L(\xdag-x_0)\|_Z^2\\
&\leq \delta+\tfrac{\alpha}{2}\|L(\xdag-x_0)\|_Z^2\leq (c+\tfrac12\|L(\xdag-x_0)\|_Z^2)\ \alpha
\end{aligned}
\end{equation}
hence
\[ 
\begin{aligned}
&\frac{D^2\mathcal{L}}{Dx^2}(\xad)(h,h)=\|F'(\xad)h\|_Y^2+(F(\xad)-y,F''(\xad)(h,h))_Y+\alpha\|Lh\|_Z^2\\
&\geq 2\sqrt\alpha \|F'(\xad)h\|_Y \|Lh\|_Z-\|F(\xad)-\ydel\|_Y\|F''(\xad)(h,h)\|_Y\\
&\geq \sqrt\alpha \Bigl(2\|F'(\xad)h\|_Y \|Lh\|_Z-\sqrt{2c+\|L(\xdag-x_0)\|_Z^2}
\|F''(\xad)(h,h)\|\Bigr)\geq 0\\
& \forall h\in X\,.
\end{aligned}
\]
(ii) Under a variational source condition 
\begin{equation}\label{scred}
-(L(\xdag-x_0),L(x-x^\dagger))_Z\leq \frac{1}{2\bar{c}}\|F(x)-y\|_Y\quad x\in X
\end{equation}
where we again assume $F$ be twice differentiable with uniformly bounded second derivative in the sense that
\[
\sup_{h\in X}\frac{\|F''(x)(h,h)\|}{\|Lh\|_Z^2}=:\bar{c}<\infty
\]
we have, by minimality \eqref{minimality_red},
\[
\begin{aligned}
&\|F(\xad)-y\|_Y^2\leq 2\delta+\alpha(\|L(\xdag-x_0)\|_Z^2-\|L(\xad-x_0)\|_Z^2)\\
&=2\delta-2\alpha(L(\xdag-x_0),L(\xad-x^\dagger))-\alpha\|L(\xad-x^\dagger)\|_Z^2
\leq 2\delta+\alpha \frac{1}{\bar{c}}\|F(\xad)-y\|_Y\,,
\end{aligned}
\]
hence 
\[
\frac12 \|F(\xad)-y\|_Y^2+\frac12 (\|F(\xad)-y\|_Y-\frac{\alpha}{\bar{c}})^2
\leq2\delta+\frac12\frac{\alpha^2}{\bar{c}^2}
\]
i.e., for $\alpha\geq 2\bar{c}\sqrt{\delta}$ (which is compatible with \eqref{alphared} and actually corresponds to the optimal a priori choice $\alpha\sim\sqrt{\delta}$ under the variational source condition \eqref{scred}) we get
\[
\|F(\xad)-y\|_Y\leq \frac{\alpha}{\bar{c}}\,,
\]
so that we arrive at
\[ 
\begin{aligned}
&\frac{D^2\mathcal{L}}{Dx^2}(\xad)(h,h)=\|F'(\xad)h\|_Y^2+(F(\xad)-y,F''(\xad)(h,h))_Y+\alpha\|Lh\|_Z^2\\
&\geq \|F'(\xad)h\|_Y^2 -\|F(\xad)-y\|_Y\bar{c}\|Lh\|_Z^2 +\alpha\|Lh\|_Z^2
\geq \|F'(\xad)h\|_Y^2\geq0
\end{aligned}
\]

\paragraph{All-at-once formulation}
Analogously, with the notation \eqref{Fxuy}, we get the following two alternative sufficient conditions for convexity:
\begin{equation}\label{nonlincond_aao}
\begin{aligned}
(i)\quad&\|A''_{xx}(\xad,\uad)(h_x,h_x)+A''_{xu}(\xad,\uad)(h_x,h_u)+A''_{uu}(\xad,\uad)(h_u,h_u)\|_W^2\\
&\quad+\|C''(\uad)(h_u,h_u)\|_Y^2\\
&\leq \tfrac{4}{\|L((\xdag,\udag)-(x_0,u_0))\|_Z^2+2c}
\Bigl(\|A'_x(\xad,\uad)h_x+A'_u(\xad,\uad)h_u\|_W^2\\
&\qquad\qquad+\|C'(\uad)h_u\|_Y^2\Bigr)\|L(h_x,h_u)\|_Z^2 \\
&\forall (h_x,h_u)\in X\times V
\end{aligned}
\end{equation}
or 
\begin{equation}\label{scaao}
\begin{aligned}
(ii)\quad &(L((\xdag,\udag)-(x,u)),L((x,u)-(\xdag,\udag)))_Z\\
&\leq \frac{1}{\sqrt{2}\bar{c}}
(\|A(x,u)\|_W^2+\|C(u)-y\|_Y)\quad (x,u)\in X\times V
\end{aligned}
\end{equation}
where 
\[
\begin{aligned}
&\bar{c}^2:=
\sup_{h\in X}\frac{1}{\|L(h_x,h_u)\|_Z^2} \Bigl(\|C''(\uad)(h_u,h_u)\|_Y^2\\
&\|A''_{xx}(\xad,\uad)(h_x,h_x)+A''_{xu}(\xad,\uad)(h_x,h_u)+A''_{uu}(\xad,\uad)(h_u,h_u)\|_W^2\Bigr)\,.
\end{aligned}
\]

\paragraph{Morozov type all-at-once formulation}
The same reasoning could also be used in case \eqref{minJRaaoM} if for $\calS$ a discrepancy measure leading to linear constraints $B(x,u)-b\leq0$ with some linear operator $B:X\times V\to \tilde{Z}$ for some Banach space $\tilde{Z}$ (like, e.g., in \eqref{EITbox}) is used so that these constraints give no contribution to the Hessian of the Lagrange function. Then by minimality 
\[
\begin{aligned}
&\frac12\|A(\xad,\uad)\|_W^2+\frac{\alpha}{2}\|L((\xad,\uad)-(x_0,u_0))\|_Z^2\\
&\leq \frac12\|A(\xdag,\udag)\|^2+\frac{\alpha}{2}\|L((\xdag,\udag)-(x_0,u_0))\|_Z^2\\
&= \alpha\ \frac12\|L((\xdag,\udag)-(x_0,u_0))\|_Z^2\,,
\end{aligned}
\]
and under the condition 
\begin{equation}\label{nonlincond_aaoM}
\begin{aligned}
&\|A''_{xx}(\xad,\uad)(h_x,h_x)+A''_{xu}(\xad,\uad)(h_x,h_u)+A''_{uu}(\xad,\uad)(h_u,h_u)\|_W\\
&\leq \frac{2}{\|L((\xdag,\udag)-(x_0,u_0))\|_Z}
\|A'_x(\xad,\uad)h_x+A'_u(\xad,\uad)h_u\|_W\|L(h_x,h_u)\|_Z\\
&\forall (h_x,h_u)\in X\times V\,.
\end{aligned}
\end{equation}
we get convexity of the Lagrange function at $(\xad,\uad)$
\begin{equation}\label{convexLagraao} 
\begin{aligned}
&\frac{D^2\mathcal{L}}{D(x,u)^2}(\xad,\uad)((h_x,h_u),(h_x,h_u))=
\|A'_x(\xad,\uad)h_x+A'_u(\xad,\uad)h_u\|_W^2\\
&\quad+(A(\xad,\uad),A''_{xx}(\xad,\uad)(h_x,h_x)+A''_{xu}(\xad,\uad)(h_x,h_u)\\
&\qquad +A''_{uu}(\xad,\uad)(h_u,h_u))_W+\alpha\|L(h_x,h_u)\|_Z^2\\
%&\geq 2\sqrt\alpha \|A'_x(\xad,\uad)h_x+A'_u(\xad,\uad)h_u\|_W\|L(h_x,h_u)\|_Z\\
%&\quad-\|A(\xad,\uad)\|_W \|A''_{xx}(\xad,\uad)(h_x,h_x)+A''_{xu}(\xad,\uad)(h_x,h_u)+A''_{uu}(\xad,\uad)(h_u,h_u)\|_W
&\geq 0
%\\
%&\forall (h_x,h_u)\in X\times V\,.
\end{aligned}
\end{equation}
Since $A'(\xad,\uad)$ typically has a nontrivial nullspace, condition \eqref{nonlincond_aaoM} is quite unlikely to hold, though.
Unfortunately a similar statement holds for the source condition 
\[
(L((\xdag,\udag)-(x_0,u_0)),L((x,u)-(\xdag,\udag)))_Z\leq c\|A(x,u)\|_W\quad (x,u)\in X\times V
\]
for some $c>0$.

In such a situation one might use the regularization parameter $\alpha$ to enforce convexity due to estimate \eqref{convexLagraao}, via a regularization parameter choice 
\begin{equation}\label{alphaaaoMadhoc}
\alpha\geq \bar{c} \|A(\xad,\uad)\|_W
\end{equation}
where
\begin{equation}\label{cbaraao}
\begin{aligned}
&\bar{c}\geq
\sup_{h\in X}\frac{1}{\|L(h_x,h_u)\|_Z^2}\\
&\quad\|A''_{xx}(\xad,\uad)(h_x,h_x)+A''_{xu}(\xad,\uad)(h_x,h_u)+A''_{uu}(\xad,\uad)(h_u,h_u)\|_W \,.
\end{aligned}
\end{equation}

Indeed, e.g., for $\calJ=\frac12\|A\|_W^2$ in Example \ref{exKV}, we can obtain an estimate of the form \eqref{cbaraao} as follows.
First of all, we estimate
\[
\begin{aligned}
&\|A''_{xx}(\xad,\uad)(h_x,h_x)+A''_{xu}(\xad,\uad)(h_x,h_u)+A''_{uu}(\xad,\uad)(h_u,h_u)\|_W\\
%&\|A''_{\sigma\sigma}(\sigma,\Phi,\Psi)(h_\sigma,h_{\Phi\Psi})+A''_{\sigma(\Phi\Psi)}(\sigma,\Phi,\Psi)(h_\sigma,h_{\Phi\Psi})+A''_{(\Phi\Psi)}(\sigma,\Phi,\Psi)(h_{\Phi\Psi},h_{\Phi\Psi})\|_{L^2(\Omega)^I}\\
&=\Bigl(\sum_{i=1}^I\|
-\frac{h_\sigma^2}{4\sigma^{3/2}}\nabla\phi_i 
-\frac{3h_\sigma^2}{4\sigma^{5/2}}\nabla^\bot\psi_i
+\frac{h_\sigma}{2\sigma^{1/2}}\nabla h_{\phi_i}
+\frac{h_\sigma}{2\sigma^{3/2}}\nabla^\bot h_{\psi_i}\|_{L^2(\Omega)}^2
\Bigr)^{1/2}\\
%&\leq 
%\sum_{i=1}^I
%\|\frac{h_\sigma^2}{4\sigma^{3/2}}\nabla\phi_i\|_{L^2(\Omega)}
%+\|\frac{3h_\sigma^2}{4\sigma^{5/2}}\nabla^\bot\psi_i\|_{L^2(\Omega)}
%+\|\frac{h_\sigma}{2\sigma^{1/2}}\nabla h_{\phi_i}\|_{L^2(\Omega)}
%+\|\frac{h_\sigma}{2\sigma^{3/2}}\nabla^\bot h_{\psi_i}\|_{L^2(\Omega)}\\
&\leq 
\sum_{i=1}^I
\frac{1}{4\ul{\sigma}^{3/2}}
\|h_\sigma\|_{L^{4p/(p-1)}(\Omega)}^2
\|\phi_i\|_{W^{1,2p}(\Omega)}
+\frac{3}{4\ul{\sigma}^{5/2}}
\|h_\sigma\|_{L^{4p/(p-1)}(\Omega)}^2
\|\psi_i\|_{W^{1,2p}(\Omega)}\\
&\qquad+\frac{1}{2\ul{\sigma}^{1/2}}
\|h_\sigma\|_{L^{2p/(p-1)}(\Omega)}
\|h_{\phi_i}\|_{W^{1,2p}(\Omega)}
+\frac{1}{2\ul{\sigma}^{3/2}}
\|h_\sigma\|_{L^{2p/(p-1)}(\Omega)}
\|h_{\psi_i}\|_{W^{1,2p}(\Omega)}\\
&\leq \tilde{c} \|(h_\sigma,h_\Phi,h_\Psi)\|_{L^{4p/(p-1)}(\Omega)\times W^{1,2p}(\Omega)^{2I}}^2 
\end{aligned}
\]
Thus, using additional regularization of $\sigma$, i.e., 
\begin{equation}\label{REITsigma}
\calR(\sigma,\Phi,\Psi)=\|\sigma\|_{H^{1-\tilde{\epsilon}}(\Omega)}^2
+\|(\Phi,\Psi)\|_{H^{3/2-\epsilon}(\Omega)^{2I}}^2
=:\|L(\sigma,\Phi,\Psi)\|_{L^2(\Omega)^{2I+1}}^2\,,
\end{equation}
with 
\begin{equation}
\epsilon, \tilde{\epsilon}\in(0,\tfrac12), \quad \epsilon+2\tilde{\epsilon}\leq \tfrac12
\end{equation} 
in place of \eqref{REIT},
we obtain, using continuity of the embeddings $H^{1-\tilde{\epsilon}}(\Omega)\to$\\ $L^{4p/(p-1)}(\Omega)$ and $H^{3/2-\epsilon}(\Omega)\to W^{1,2p}(\Omega)$ with $p=\frac{2}{1+2\epsilon}$, due to $\epsilon+2\tilde{\epsilon}\leq \frac12$, that  
\[
\calR(h_\sigma,h_\Phi,h_\Psi)=\|L(h_\sigma,h_\Phi,h_\Psi)\|_{L^2(\Omega)}^2
\geq \ul{c}\|(h_\sigma,h_\Phi,h_\Psi)\|_{L^{4p/(p-1)}(\Omega)\times W^{1,2p}(\Omega)^{2I}}^2  
\]
for some $\ul{c}>0$. Hence we get \eqref{cbaraao} with $\bar{c}=\sqrt{\frac{\tilde{c}}{\ul{c}}}$. 

In principle, \eqref{alphaaaoMadhoc} can be implemented by using a fixed point iteration 
\[
\alpha^{k+1}=\bar{c} \|A(x_{\alpha^k}^\delta,u_{\alpha^k}^\delta)\|_W
\]
where each step involves computation of $(x_{\alpha^k}^\delta,u_{\alpha^k}^\delta)$, i.e.,  solution of the regularized minimization problem \eqref{minJR} with regularization parameter $\alpha^k$.
However, existence of a fixed point and its compatibility with the requirement $\alpha\to0$ as $\delta\to0$ from Corollary \ref{cor:convredaaoMaao} is yet to be investigated.

\section{Conclusions and outlook}
In this paper we have carried out some first steps into investigating minimization based formulation and regularization of inverse problems, motivated by the variational (Kohn-Vogelius) approach for EIT.
We have proven convergence for a large class of regularization methods, containing the existing theory on reduced and all-at-once regularization, but also new regularization approaches and in particular a bound constrained minimization approach for regularizing the variational formulation of EIT. 

Our next step in this context will be to employ the fast method devised in \cite{HungerlaenderRendl15} for solving the regularized EIT problem \eqref{EITbox} via a sequence of box constrained quadratic programs.\\ 
Further future work in this context will on one hand be concerned with extending the convergence analysis, e.g. to 
(a) rates under source conditions;
(b) the use of alternative choices of the Ivanov parameter $\rho$;
(c) iterative regularization by Newton or gradient (Landweber) type methods.\\
On the other hand we aim at extending the variational approach to other applications, such as identification of material parameters in linear magnetistatics or elastostatics or detection of cracks or inclusions from surface measurements, 
\revision{
cf. Examples \ref{exKV_mag}, \ref{exKV_crack}.
}

\section*{Acknowledgment}
The author wishes to thank Franz Rendl and Philipp Hun\-ger\-l\"ander, Alpen-Adria Universit\"at Klagenfurt, for fruitful discussions motivation the formulation and development the regularized EIT problem \eqref{EITbox}. 
Moreover, the author gratefully acknowledges financial support by the Austrian Science Fund FWF under the grants I2271 ``Regularization and Discretization of Inverse Problems for PDEs in Banach Spaces'' and P30054 ``Solving Inverse Problems without Forward Operators'' as well as partial support by the Karl Popper Kolleg ``Modeling-Simulation-Optimization'', funded by the Alpen-Adria-Universit\" at Klagenfurt and by the Carin\-thian Economic Promotion Fund (KWF).

Moreover, we wish to thank the reviewers for fruitful comments leading to an improved version of the manuscript.

\medskip

\end{document}